\documentclass{amsart}
\usepackage[latin1]{inputenc}
\usepackage[english]{babel}
\usepackage{csquotes}

\usepackage{color} 
\usepackage[colorlinks=true,linktoc=all,linkcolor=blue,citecolor=red,filecolor=pink,urlcolor=blue]{hyperref}
\usepackage[hyperref=true,style=alphabetic,citestyle=alphabetic,sorting=nyt,backend=bibtex,maxnames=100,doi=false,isbn=false,url=false,giveninits=true]{biblatex}
\bibliography{biblio}

\usepackage{graphicx}
\graphicspath{{pictures/}}

\usepackage[mathcal,mathscr]{euscript}
\usepackage{mathrsfs}

\usepackage{tikz}
\usetikzlibrary{arrows}

\usepackage[textsize=small]{todonotes}

\usepackage{fouriernc}
\usepackage[scaled=0.875]{helvet}
\usepackage{courier}

\usepackage{amssymb,amsmath,latexsym}
\theoremstyle{plain}                    
\newtheorem{theorem}{Theorem}[section]
\newtheorem{lemma}[theorem]{Lemma}
\newtheorem{proposition}[theorem]{Proposition}
\newtheorem{corollary}[theorem]{Corollary}
\newtheorem{question}[theorem]{Question}
\theoremstyle{definition}

\theoremstyle{remark}
\newtheorem{remark}[theorem]{Remark}

\numberwithin{equation}{section}

\newcommand{\zz}{\mathbb Z}

\newcommand{\rr}{\mathbb R}

\newcommand{\hh}{\mathbb H}

\newcommand{\pcd}{\operatorname{pcd}}

\newcommand{\six}{\mathsf{C}_{600}}

\newcommand{\Sb}{\mathbb{S}}

\newcommand{\PS}{\#}

\usepackage[inline]{enumitem}
\setenumerate[1]{label=\textbf{\arabic*.}}
\renewcommand{\leq}{\leqslant}
\renewcommand{\geq}{\geqslant}
\renewcommand{\setminus}{\smallsetminus}

  \usepackage[breakable]{tcolorbox}
    \usepackage{parskip} 

    \usepackage{caption}

    \usepackage{float}
    \usepackage{xcolor} 
    \usepackage{geometry} 
    \usepackage{textcomp} 
    \AtBeginDocument{%
    }
    \usepackage{upquote} 
    \usepackage{eurosym} 

    \usepackage{iftex}
    \ifPDFTeX
        \usepackage[T1]{fontenc}
        \IfFileExists{alphabeta.sty}{
              \usepackage{alphabeta}
          }{
              \usepackage[mathletters]{ucs}
              \usepackage[utf8x]{inputenc}
          }
    \else
        \usepackage{fontspec}
        \usepackage{unicode-math}
    \fi

    \usepackage{fancyvrb} 
    \usepackage[Export]{adjustbox} 
    \adjustboxset{max size={0.9\linewidth}{0.9\paperheight}}

    \usepackage{longtable} 
    \usepackage{booktabs}  
    \usepackage{array}     
    \usepackage{calc}      
    \usepackage[normalem]{ulem} 
 \usepackage{mathrsfs}

    \definecolor{urlcolor}{rgb}{0,.145,.698}
    \definecolor{linkcolor}{rgb}{.71,0.21,0.01}
    \definecolor{citecolor}{rgb}{.12,.54,.11}

    \definecolor{ansi-black}{HTML}{3E424D}
    \definecolor{ansi-black-intense}{HTML}{282C36}
    \definecolor{ansi-red}{HTML}{E75C58}
    \definecolor{ansi-red-intense}{HTML}{B22B31}
    \definecolor{ansi-green}{HTML}{00A250}
    \definecolor{ansi-green-intense}{HTML}{007427}
    \definecolor{ansi-yellow}{HTML}{DDB62B}
    \definecolor{ansi-yellow-intense}{HTML}{B27D12}
    \definecolor{ansi-blue}{HTML}{208FFB}
    \definecolor{ansi-blue-intense}{HTML}{0065CA}
    \definecolor{ansi-magenta}{HTML}{D160C4}
    \definecolor{ansi-magenta-intense}{HTML}{A03196}
    \definecolor{ansi-cyan}{HTML}{60C6C8}
    \definecolor{ansi-cyan-intense}{HTML}{258F8F}
    \definecolor{ansi-white}{HTML}{C5C1B4}
    \definecolor{ansi-white-intense}{HTML}{A1A6B2}
    \definecolor{ansi-default-inverse-fg}{HTML}{FFFFFF}
    \definecolor{ansi-default-inverse-bg}{HTML}{000000}

    \definecolor{outerrorbackground}{HTML}{FFDFDF}

    \DefineVerbatimEnvironment{Highlighting}{Verbatim}{commandchars=\\\{\}}

    \let\Oldtex\TeX
    \let\Oldlatex\LaTeX
    \renewcommand{\TeX}{\textrm{\Oldtex}}
    \renewcommand{\LaTeX}{\textrm{\Oldlatex}}

\makeatletter
\def\PY@reset{\let\PY@it=\relax \let\PY@bf=\relax%
    \let\PY@ul=\relax \let\PY@tc=\relax%
    \let\PY@bc=\relax \let\PY@ff=\relax}
\def\PY@tok#1{\csname PY@tok@#1\endcsname}
\def\PY@toks#1+{\ifx\relax#1\empty\else%
    \PY@tok{#1}\expandafter\PY@toks\fi}
\def\PY@do#1{\PY@bc{\PY@tc{\PY@ul{%
    \PY@it{\PY@bf{\PY@ff{#1}}}}}}}
\def\PY#1#2{\PY@reset\PY@toks#1+\relax+\PY@do{#2}}

\@namedef{PY@tok@w}{\def\PY@tc##1{\textcolor[rgb]{0.73,0.73,0.73}{##1}}}
\@namedef{PY@tok@c}{\let\PY@it=\textit\def\PY@tc##1{\textcolor[rgb]{0.24,0.48,0.48}{##1}}}
\@namedef{PY@tok@cp}{\def\PY@tc##1{\textcolor[rgb]{0.61,0.40,0.00}{##1}}}
\@namedef{PY@tok@k}{\let\PY@bf=\textbf\def\PY@tc##1{\textcolor[rgb]{0.00,0.50,0.00}{##1}}}
\@namedef{PY@tok@kp}{\def\PY@tc##1{\textcolor[rgb]{0.00,0.50,0.00}{##1}}}
\@namedef{PY@tok@kt}{\def\PY@tc##1{\textcolor[rgb]{0.69,0.00,0.25}{##1}}}
\@namedef{PY@tok@o}{\def\PY@tc##1{\textcolor[rgb]{0.40,0.40,0.40}{##1}}}
\@namedef{PY@tok@ow}{\let\PY@bf=\textbf\def\PY@tc##1{\textcolor[rgb]{0.67,0.13,1.00}{##1}}}
\@namedef{PY@tok@nb}{\def\PY@tc##1{\textcolor[rgb]{0.00,0.50,0.00}{##1}}}
\@namedef{PY@tok@nf}{\def\PY@tc##1{\textcolor[rgb]{0.00,0.00,1.00}{##1}}}
\@namedef{PY@tok@nc}{\let\PY@bf=\textbf\def\PY@tc##1{\textcolor[rgb]{0.00,0.00,1.00}{##1}}}
\@namedef{PY@tok@nn}{\let\PY@bf=\textbf\def\PY@tc##1{\textcolor[rgb]{0.00,0.00,1.00}{##1}}}
\@namedef{PY@tok@ne}{\let\PY@bf=\textbf\def\PY@tc##1{\textcolor[rgb]{0.80,0.25,0.22}{##1}}}
\@namedef{PY@tok@nv}{\def\PY@tc##1{\textcolor[rgb]{0.10,0.09,0.49}{##1}}}
\@namedef{PY@tok@no}{\def\PY@tc##1{\textcolor[rgb]{0.53,0.00,0.00}{##1}}}
\@namedef{PY@tok@nl}{\def\PY@tc##1{\textcolor[rgb]{0.46,0.46,0.00}{##1}}}
\@namedef{PY@tok@ni}{\let\PY@bf=\textbf\def\PY@tc##1{\textcolor[rgb]{0.44,0.44,0.44}{##1}}}
\@namedef{PY@tok@na}{\def\PY@tc##1{\textcolor[rgb]{0.41,0.47,0.13}{##1}}}
\@namedef{PY@tok@nt}{\let\PY@bf=\textbf\def\PY@tc##1{\textcolor[rgb]{0.00,0.50,0.00}{##1}}}
\@namedef{PY@tok@nd}{\def\PY@tc##1{\textcolor[rgb]{0.67,0.13,1.00}{##1}}}
\@namedef{PY@tok@s}{\def\PY@tc##1{\textcolor[rgb]{0.73,0.13,0.13}{##1}}}
\@namedef{PY@tok@sd}{\let\PY@it=\textit\def\PY@tc##1{\textcolor[rgb]{0.73,0.13,0.13}{##1}}}
\@namedef{PY@tok@si}{\let\PY@bf=\textbf\def\PY@tc##1{\textcolor[rgb]{0.64,0.35,0.47}{##1}}}
\@namedef{PY@tok@se}{\let\PY@bf=\textbf\def\PY@tc##1{\textcolor[rgb]{0.67,0.36,0.12}{##1}}}
\@namedef{PY@tok@sr}{\def\PY@tc##1{\textcolor[rgb]{0.64,0.35,0.47}{##1}}}
\@namedef{PY@tok@ss}{\def\PY@tc##1{\textcolor[rgb]{0.10,0.09,0.49}{##1}}}
\@namedef{PY@tok@sx}{\def\PY@tc##1{\textcolor[rgb]{0.00,0.50,0.00}{##1}}}
\@namedef{PY@tok@m}{\def\PY@tc##1{\textcolor[rgb]{0.40,0.40,0.40}{##1}}}
\@namedef{PY@tok@gh}{\let\PY@bf=\textbf\def\PY@tc##1{\textcolor[rgb]{0.00,0.00,0.50}{##1}}}
\@namedef{PY@tok@gu}{\let\PY@bf=\textbf\def\PY@tc##1{\textcolor[rgb]{0.50,0.00,0.50}{##1}}}
\@namedef{PY@tok@gd}{\def\PY@tc##1{\textcolor[rgb]{0.63,0.00,0.00}{##1}}}
\@namedef{PY@tok@gi}{\def\PY@tc##1{\textcolor[rgb]{0.00,0.52,0.00}{##1}}}
\@namedef{PY@tok@gr}{\def\PY@tc##1{\textcolor[rgb]{0.89,0.00,0.00}{##1}}}
\@namedef{PY@tok@ge}{\let\PY@it=\textit}
\@namedef{PY@tok@gs}{\let\PY@bf=\textbf}
\@namedef{PY@tok@ges}{\let\PY@bf=\textbf\let\PY@it=\textit}
\@namedef{PY@tok@gp}{\let\PY@bf=\textbf\def\PY@tc##1{\textcolor[rgb]{0.00,0.00,0.50}{##1}}}
\@namedef{PY@tok@go}{\def\PY@tc##1{\textcolor[rgb]{0.44,0.44,0.44}{##1}}}
\@namedef{PY@tok@gt}{\def\PY@tc##1{\textcolor[rgb]{0.00,0.27,0.87}{##1}}}
\@namedef{PY@tok@err}{\def\PY@bc##1{{\setlength{\fboxsep}{\string -\fboxrule}\fcolorbox[rgb]{1.00,0.00,0.00}{1,1,1}{\strut ##1}}}}
\@namedef{PY@tok@kc}{\let\PY@bf=\textbf\def\PY@tc##1{\textcolor[rgb]{0.00,0.50,0.00}{##1}}}
\@namedef{PY@tok@kd}{\let\PY@bf=\textbf\def\PY@tc##1{\textcolor[rgb]{0.00,0.50,0.00}{##1}}}
\@namedef{PY@tok@kn}{\let\PY@bf=\textbf\def\PY@tc##1{\textcolor[rgb]{0.00,0.50,0.00}{##1}}}
\@namedef{PY@tok@kr}{\let\PY@bf=\textbf\def\PY@tc##1{\textcolor[rgb]{0.00,0.50,0.00}{##1}}}
\@namedef{PY@tok@bp}{\def\PY@tc##1{\textcolor[rgb]{0.00,0.50,0.00}{##1}}}
\@namedef{PY@tok@fm}{\def\PY@tc##1{\textcolor[rgb]{0.00,0.00,1.00}{##1}}}
\@namedef{PY@tok@vc}{\def\PY@tc##1{\textcolor[rgb]{0.10,0.09,0.49}{##1}}}
\@namedef{PY@tok@vg}{\def\PY@tc##1{\textcolor[rgb]{0.10,0.09,0.49}{##1}}}
\@namedef{PY@tok@vi}{\def\PY@tc##1{\textcolor[rgb]{0.10,0.09,0.49}{##1}}}
\@namedef{PY@tok@vm}{\def\PY@tc##1{\textcolor[rgb]{0.10,0.09,0.49}{##1}}}
\@namedef{PY@tok@sa}{\def\PY@tc##1{\textcolor[rgb]{0.73,0.13,0.13}{##1}}}
\@namedef{PY@tok@sb}{\def\PY@tc##1{\textcolor[rgb]{0.73,0.13,0.13}{##1}}}
\@namedef{PY@tok@sc}{\def\PY@tc##1{\textcolor[rgb]{0.73,0.13,0.13}{##1}}}
\@namedef{PY@tok@dl}{\def\PY@tc##1{\textcolor[rgb]{0.73,0.13,0.13}{##1}}}
\@namedef{PY@tok@s2}{\def\PY@tc##1{\textcolor[rgb]{0.73,0.13,0.13}{##1}}}
\@namedef{PY@tok@sh}{\def\PY@tc##1{\textcolor[rgb]{0.73,0.13,0.13}{##1}}}
\@namedef{PY@tok@s1}{\def\PY@tc##1{\textcolor[rgb]{0.73,0.13,0.13}{##1}}}
\@namedef{PY@tok@mb}{\def\PY@tc##1{\textcolor[rgb]{0.40,0.40,0.40}{##1}}}
\@namedef{PY@tok@mf}{\def\PY@tc##1{\textcolor[rgb]{0.40,0.40,0.40}{##1}}}
\@namedef{PY@tok@mh}{\def\PY@tc##1{\textcolor[rgb]{0.40,0.40,0.40}{##1}}}
\@namedef{PY@tok@mi}{\def\PY@tc##1{\textcolor[rgb]{0.40,0.40,0.40}{##1}}}
\@namedef{PY@tok@il}{\def\PY@tc##1{\textcolor[rgb]{0.40,0.40,0.40}{##1}}}
\@namedef{PY@tok@mo}{\def\PY@tc##1{\textcolor[rgb]{0.40,0.40,0.40}{##1}}}
\@namedef{PY@tok@ch}{\let\PY@it=\textit\def\PY@tc##1{\textcolor[rgb]{0.24,0.48,0.48}{##1}}}
\@namedef{PY@tok@cm}{\let\PY@it=\textit\def\PY@tc##1{\textcolor[rgb]{0.24,0.48,0.48}{##1}}}
\@namedef{PY@tok@cpf}{\let\PY@it=\textit\def\PY@tc##1{\textcolor[rgb]{0.24,0.48,0.48}{##1}}}
\@namedef{PY@tok@c1}{\let\PY@it=\textit\def\PY@tc##1{\textcolor[rgb]{0.24,0.48,0.48}{##1}}}
\@namedef{PY@tok@cs}{\let\PY@it=\textit\def\PY@tc##1{\textcolor[rgb]{0.24,0.48,0.48}{##1}}}

\makeatother

    \makeatletter
        \newbox\Wrappedcontinuationbox 
        \newbox\Wrappedvisiblespacebox 
        \newcommand*\Wrappedvisiblespace {\textcolor{red}{\textvisiblespace}} 
        \newcommand*\Wrappedcontinuationsymbol {\textcolor{red}{\llap{\tiny$\m@th\hookrightarrow$}}} 
        \newcommand*\Wrappedcontinuationindent {3ex } 
        \newcommand*\Wrappedafterbreak {\kern\Wrappedcontinuationindent\copy\Wrappedcontinuationbox} 
        \newcommand*\Wrappedbreaksatspecials {%
            \def\PYGZus{\discretionary{\char`\_}{\Wrappedafterbreak}{\char`\_}}%
            \def\PYGZob{\discretionary{}{\Wrappedafterbreak\char`\{}{\char`\{}}%
            \def\PYGZcb{\discretionary{\char`\}}{\Wrappedafterbreak}{\char`\}}}%
            \def\PYGZca{\discretionary{\char`\^}{\Wrappedafterbreak}{\char`\^}}%
            \def\PYGZam{\discretionary{\char`\&}{\Wrappedafterbreak}{\char`\&}}%
            \def\PYGZlt{\discretionary{}{\Wrappedafterbreak\char`\<}{\char`\<}}%
            \def\PYGZgt{\discretionary{\char`\>}{\Wrappedafterbreak}{\char`\>}}%
            \def\PYGZsh{\discretionary{}{\Wrappedafterbreak\char`\#}{\char`\#}}%
            \def\PYGZpc{\discretionary{}{\Wrappedafterbreak\char`\%}{\char`\%}}%
            \def\PYGZdl{\discretionary{}{\Wrappedafterbreak\char`\$}{\char`\$}}%
            \def\PYGZhy{\discretionary{\char`\-}{\Wrappedafterbreak}{\char`\-}}%
            \def\PYGZsq{\discretionary{}{\Wrappedafterbreak\textquotesingle}{\textquotesingle}}%
            \def\PYGZdq{\discretionary{}{\Wrappedafterbreak\char`\"}{\char`\"}}%
            \def\PYGZti{\discretionary{\char`\~}{\Wrappedafterbreak}{\char`\~}}%
        } 
        \newcommand*\Wrappedbreaksatpunct {%
            \lccode`\~`\.\lowercase{\def~}{\discretionary{\hbox{\char`\.}}{\Wrappedafterbreak}{\hbox{\char`\.}}}%
            \lccode`\~`\,\lowercase{\def~}{\discretionary{\hbox{\char`\,}}{\Wrappedafterbreak}{\hbox{\char`\,}}}%
            \lccode`\~`\;\lowercase{\def~}{\discretionary{\hbox{\char`\;}}{\Wrappedafterbreak}{\hbox{\char`\;}}}%
            \lccode`\~`\:\lowercase{\def~}{\discretionary{\hbox{\char`\:}}{\Wrappedafterbreak}{\hbox{\char`\:}}}%
            \lccode`\~`\?\lowercase{\def~}{\discretionary{\hbox{\char`\?}}{\Wrappedafterbreak}{\hbox{\char`\?}}}%
            \lccode`\~`\!\lowercase{\def~}{\discretionary{\hbox{\char`\!}}{\Wrappedafterbreak}{\hbox{\char`\!}}}%
            \lccode`\~`\/\lowercase{\def~}{\discretionary{\hbox{\char`\/}}{\Wrappedafterbreak}{\hbox{\char`\/}}}%
            \catcode`\.\active
            \catcode`\,\active 
            \catcode`\;\active
            \catcode`\:\active
            \catcode`\?\active
            \catcode`\!\active
            \catcode`\/\active 
            \lccode`\~`\~ 	
        }
    \makeatother

    \let\OriginalVerbatim=\Verbatim
    \makeatletter
    \renewcommand{\Verbatim}[1][1]{%
        \sbox\Wrappedcontinuationbox {\Wrappedcontinuationsymbol}%
        \sbox\Wrappedvisiblespacebox {\FV@SetupFont\Wrappedvisiblespace}%
        \def\FancyVerbFormatLine ##1{\hsize\linewidth
            \vtop{\raggedright\hyphenpenalty\z@\exhyphenpenalty\z@
                \doublehyphendemerits\z@\finalhyphendemerits\z@
                \strut ##1\strut}%
        }%
        \def\FV@Space {%
            \nobreak\hskip\z@ plus\fontdimen3\font minus\fontdimen4\font
            \discretionary{\copy\Wrappedvisiblespacebox}{\Wrappedafterbreak}
            {\kern\fontdimen2\font}%
        }%
        
        \Wrappedbreaksatspecials
        \OriginalVerbatim[#1,codes*=\Wrappedbreaksatpunct]%
    }
    \makeatother

    \definecolor{incolor}{HTML}{303F9F}
    \definecolor{outcolor}{HTML}{D84315}
    \definecolor{cellborder}{HTML}{CFCFCF}
    \definecolor{cellbackground}{HTML}{F7F7F7}
    
    \makeatletter
    \newcommand{\boxspacing}{\kern\kvtcb@left@rule\kern\kvtcb@boxsep}
    \makeatother

\makeatletter
\@namedef{subjclassname@2020}{\textup{2020} Mathematics Subject Classification}
\makeatother

\title[Convex cocompact groups with three-dimensional limit sets]{Convex cocompact groups with three-dimensional limit sets}
\author{Sami Douba}
\address{Mathematisches Institut der Universit\"at Bonn, Endenicher Allee 60, 53115 Bonn, Germany}
\email{douba@math.uni-bonn.de}

\author{Gye-Seon Lee}
\address{Department of Mathematical Sciences and Research Institute of Mathematics, Seoul National University, Seoul 08826, South Korea}
\email{gyeseonlee@snu.ac.kr}

\author{Ludovic Marquis}
\address{Univ Rennes, CNRS, IRMAR - UMR 6625, F-35000 Rennes, France}
\email{ludovic.marquis@univ-rennes.fr}

\author{Lorenzo Ruffoni}
\address{Department of Mathematics and Statistics - Binghamton University, Binghamton, NY 13902, USA}
\email{lorenzo.ruffoni2@gmail.com}

\begin{document}

\date{\today}
\subjclass[2020]{20F65, 20F67, 22E40 (primary); 20F55 (secondary)}

\begin{abstract}
We provide a general construction of convex cocompact hyperbolic reflection groups with three-dimensional limit sets. More precisely, our construction takes as input an arbitrary simplicial complex $L$ of dimension $3$ on $n$ vertices, and outputs a convex cocompact right-angled reflection group acting on real hyperbolic $n$-space whose nerve is precisely the Przytycki--\'{S}wi\k{a}tkowski subdivision of $L$. Moreover, the output reflection group is a thin subgroup of an $n$-dimensional cocompact arithmetic hyperbolic lattice. This answers affirmatively a question of M.~Kapovich concerning the existence of a convex cocompact group acting on some real hyperbolic space with limit set a \v Cech cohomology sphere other than the standard sphere. 
\end{abstract}

\keywords{reflection groups, convex cocompact subgroups, real hyperbolic spaces, limit sets, right-angled Coxeter groups, Pontryagin surfaces, trees of manifolds, Jakobsche spaces}

\maketitle

\section{Introduction}

The Gromov boundary of a Gromov-hyperbolic group $\Gamma$ is a well-defined topological invariant of $\Gamma$ that is compact and metrizable \cite{G87}. It is natural to ask which compact metrizable spaces arise as the boundary of some Gromov-hyperbolic group. While the possible boundaries of topological dimension one are well understood \cite{KK00}, this problem remains largely open in higher dimensions. Indeed, beyond spheres, relatively few examples of dimension $>1$ are known; notable exceptions include certain Menger and Sierpi\'nski compacta, Pontryagin surfaces, and many trees of manifolds \cite{DR99,FI03,DO07,PS09,SW20}.

A closely related question arises in the study of discrete subgroups of the isometry group $\mathrm{Isom}(\mathbb{H}^n)$ of real hyperbolic $n$-space $\mathbb{H}^n$. The action of such a subgroup $\Gamma$ on $\mathbb{H}^n$ extends to the visual boundary $\partial_\infty \mathbb{H}^n$ of $\mathbb{H}^n$, namely, the conformal sphere $\Sb^{n-1}$, and in the case that $\Gamma$ is not virtually abelian, there is a unique closed nonempty minimal $\Gamma$-invariant subset $\Lambda_\Gamma \subset \partial_\infty \mathbb{H}^n$, called the {\em limit set} of $\Gamma$. The limit set is a fundamental object of study, as it encodes key geometric and dynamical information about the group $\Gamma$. In particular, if $\Gamma$ is convex cocompact, then $\Gamma$ is Gromov-hyperbolic, and the Gromov boundary of $\Gamma$ is equivariantly homeomorphic to $\Lambda_\Gamma$ \cite{KAP09,SU79}. Despite considerable interest, the possible topological types of limit sets remain poorly understood; for work in this direction, see, for instance, \cite{BO97, DLMR26,Ma25,MZ25}.

In the present work, we address the following question of M. Kapovich.

\begin{question}{\rm \cite[Question 9.4]{Kapovich08}} \label{question:kapovich}\,
Is there a convex cocompact subgroup of $\mathrm{Isom}(\mathbb{H}^n)$ for some dimension $n$ whose limit set is a \v{C}ech cohomology sphere but is not homeomorphic to a sphere?
\end{question}

We remark that, if $\Gamma < \mathrm{Isom}(\hh^n)$ is convex cocompact, then $\Lambda_\Gamma$ is a \v{C}ech cohomology sphere if and only if $\Gamma$ is a virtual Poincar\'e duality group \cite{BM91}. In this paper, we resolve Question~\ref{question:kapovich} affirmatively by providing a general construction of convex cocompact subgroups of $\mathrm{Isom}(\mathbb{H}^n)$ with $2$- and $3$-dimensional limit sets.

It is known that an abstract right-angled Coxeter group is Gromov-hyperbolic if and only if its nerve is so-called {\em flag-no-square}; see \S\ref{subsection:Coxeter_group} for definitions. Dranishnikov \cite{DR97} introduced a procedure that takes as input an arbitrary simplicial $2$-complex $L$ and outputs a flag-no-square subdivision $L^\PS$ of $L$. This procedure was generalized to dimension $\leqslant 3$ by Przytycki and \'{S}wi\k{a}tkowski \cite{PS09}; we will continue to denote the Przytycki--\'{S}wi\k{a}tkowski subdivision of a simplicial $3$-complex $L$ by $L^\PS$. Our main result is as follows.

\begin{theorem}\label{thm:main}
 For each $n \geqslant 4$, there is a cocompact arithmetic lattice  $\Delta_n < \mathrm{Isom}(\hh^n)$ such that, for any simplicial complex $L$ on $n$ vertices and of dimension $d \leqslant 3$, the lattice $\Delta_n$ contains a right-angled reflection subgroup $\Gamma_L$ whose nerve is $L^\PS$. Moreover, if $d \geqslant 1$, then $\Gamma_L$ can be chosen to be Zariski-dense\footnote{It is not difficult to see that the condition $d \geq 1$ is also necessary here.} in $\mathrm{Isom}(\hh^n)$.
\end{theorem}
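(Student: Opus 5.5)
Since $\Gamma_L$ is to be a right-angled reflection group with nerve $L^\PS$, the natural approach is through Gram matrices. The key point is that $L^\PS$ has many more vertices than $n$, while we only have $n$ dimensions. So the reflection vectors cannot be linearly independent, and the Vinberg-type theory for linearly independent configurations does not apply directly. Instead, I would use a criterion for right-angled reflection groups generated by reflections in hyperplanes $H_v \subset \hh^n$, $v \in V(L^\PS)$: if $H_v \perp H_w$ whenever $\{v,w\}$ is an edge of $L^\PS$, and $H_v, H_w$ are ultraparallel with sufficiently large distance otherwise, then the reflections generate a right-angled Coxeter group with nerve $L^\PS$ acting convex cocompactly. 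This criterion is a ping-pong/combination argument in the style of the flag-no-square hyperbolicity. The plan is therefore to assign to each vertex of $L^\PS$ a spacelike unit vector $e_v$ in a Lorentzian lattice $\Lambda \subset \rr^{n,1}$ defined over a totally real field. I would take $\Delta_n$ to be the (cocompact, arithmetic) group $O(f,\mathcal{O}_k)$ for a suitable anisotropic form $f$, e.g.\ $f = -\sqrt2 x_0^2 + x_1^2+\dots+x_n^2$ over $\zz[\sqrt2]$. The requirements are $\langle e_v,e_w\rangle = 0$ on edges and $\langle e_v,e_w\rangle < -M$ on non-edges. Integrality of the $e_v$ and of the reflections guarantees $\Gamma_L < \Delta_n$.

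To produce the vectors, I would exploit the structure of $L^\PS$. Its vertices are organized by the original simplices of $L$: each vertex of $L^\PS$ is the barycenter of some face $\sigma$ of the iterated subdivision, lying in a simplex of $L$. The $n$ vertices of $L$ give $n$ coordinate directions. I would send a vertex of $L^\PS$ supported in the open simplex $\sigma \in L$ to a vector whose Euclidean part lies mostly in $\mathrm{span}\{x_i : i \in \sigma\}$, with large entries, plus a controlled $x_0$-component. Vectors supported on disjoint coordinate sets are then automatically orthogonal in the Euclidean part, and orthogonality is achieved by tuning the $x_0$ terms. Non-adjacent pairs must be made very negative, which can be done by scaling up the $x_0$-components. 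The key inductive step is to check, simplex dimension by simplex dimension ($d=0,1,2,3$), that the combinatorics of the Przytycki--\'Swi\k{a}tkowski construction (each simplex subdivided into a flag-no-square piece compatible on faces) can be realized. This is done by solving, inside the finite-dimensional space $\mathrm{span}\{x_0, x_i : i\in\sigma\}$, a system that specifies inner products with the vectors already placed on $\partial\sigma$. The low dimension $d \leq 3$ is exactly what keeps these local spaces (of dimension at most $5$) large enough relative to the local constraints.

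The main obstacle I expect is arranging integrality together with the prescribed orthogonality and the large negative products. Real solutions are easy to find by genericity, but we need $e_v \in \Lambda$ with $f(e_v)$ a unit-compatible value, so that the reflections are integral. I would try to solve over $\mathcal{O}_k$ by choosing vectors of the shape $e_v = a x_0 + \sum_{i\in\sigma} b_i x_i$ and imposing norm conditions via Pell-type equations in $\zz[\sqrt2]$, which have infinitely many solutions and so allow arbitrarily large entries. After that, I would verify the ping-pong criterion for convex cocompactness.

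For Zariski density when $d \geq 1$, I would argue as follows. The Zariski closure of $\Gamma_L$ is generated by reflections, so it is either all of $O(n,1)$ or preserves a proper subspace or a point. The vectors $e_v$ span $\rr^{n,1}$, since each coordinate $x_i$ appears, and an edge of $L$ supplies a non-orthogonal, non-commuting pair. Consequently the reflection representation is irreducible, and an irreducible Zariski closure generated by reflections is the full orthogonal group.
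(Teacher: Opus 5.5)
There is a genuine gap, and it sits at the step you treat as routine: producing the vectors inside a single $3$-simplex $\sigma$. As you set it up, every vector attached to $\sigma^\PS$ lies in the $5$-dimensional Lorentzian space $V_\sigma=\mathrm{span}\{x_0,x_i : i\in\sigma\}$. Take a vertex $u$ of $\sigma^\PS$ in the interior of $\sigma$; there are $94$ such vertices, and each has a full icosahedron as its link. Its $12$ neighbours must then lie in the $4$-dimensional Lorentzian space $e_u^\perp\cap V_\sigma$. There they define a right-angled reflection group on $\hh^3$ with icosahedral nerve and no asymptotic walls, i.e.\ the compact right-angled dodecahedron. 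That polyhedron is rigid, and its Gram matrix has non-zero off-diagonal entries exactly $-\varphi$ and $-\varphi^2$.

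So the local system is heavily overdetermined: there are $580$ unknowns against $116+678=794$ equations (unit norms plus the edges of $\sigma^\PS$). It is solvable only because the right-angled $120$-cell exists. Your claims that real solutions are easy to find by genericity, and that $d\le 3$ keeps the local spaces large relative to the constraints, are therefore false. The missing idea is exactly the paper's identification of $\sigma^\PS$ with the $120$-cell minus the four facets at a vertex, made explicit via the $600$-cell coordinates and the change of basis $g$.

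The same rigidity breaks the rest of your plan.

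\begin{enumerate}
\item Non-adjacent products cannot be pushed below $-M$ for large $M$ by scaling $x_0$-components, since some are pinned at $-\varphi$. Fortunately $<-1$ suffices, with no ping-pong needed: Vinberg's theory gives discreteness and Desgroseilliers--Haglund gives convex cocompactness.
\item Your lattice $O(f,\zz[\sqrt2])$ cannot contain these reflections. If $r_e$ is defined over $\mathbb{Q}(\sqrt2)$, then $e$ is proportional to a $\mathbb{Q}(\sqrt2)$-rational vector, so $\langle e_v,e_w\rangle^2\in\mathbb{Q}(\sqrt2)$ for unit vectors. But $\varphi^2=(3+\sqrt5)/2\notin\mathbb{Q}(\sqrt2)$, so the field must contain $\sqrt5$; this is why the paper works over $\zz[\varphi]$.
\item You give no mechanism for gluing the rigid local pieces. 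The vectors on a shared triangle must serve every $3$-simplex containing it, and pairs from different $3$-simplices must have product $<-1$. The paper achieves this with the form $B_n$, whose off-diagonal entries are all $-\varphi$. For this form, the vectors in Table~\ref{table:coefficients} attached to a face depend only on that face, and the explicit estimates of Cases 1--4 then handle the cross-simplex pairs.
\item For irreducibility, a single non-commuting pair is not enough. You need $(L^\PS)^{(1)}$ not to be a join, which the paper checks before applying Vinberg and Benoist--de la Harpe.
\end{enumerate}
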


We remark that, for $n\leq 3$, we have that $L$ is a subcomplex of the $2$-simplex, so that $L^\PS$ is a full subcomplex of the icosahedron. Thus, in this case, there is a reflection subgroup with nerve $L^\PS$ inside the reflection group associated to the right-angled dodecahedron in $\hh^3$.

Note that a finitely generated reflection group $\Gamma < \mathrm{Isom}(\hh^n)$ is geometrically finite, so that if $\Gamma$ is moreover contained in a cocompact lattice in $\mathrm{Isom}(\hh^n)$, then $\Gamma$ is automatically convex cocompact. Nevertheless, it will be clear from our argument that the reflection groups we construct are convex cocompact, using the fact that a group generated by the reflections in the walls of a finite-sided right-angled hyperbolic polyhedron $P$ is convex cocompact if and only if no two walls of $P$ are asymptotic (see \cite[Theorem~4.7]{DH13}). In particular, we recover Przytycki--\'{S}wi\k{a}tkowski's result~\cite[Proposition~2.13]{PS09} that their subdivision $L^\PS$ is flag-no-square.

\begin{remark}\label{rem:thin}
    Any subgroup $\Gamma_L < \mathrm{Isom}(\hh^n)$ as in Theorem~\ref{thm:main} is necessarily of infinite covolume, so that the subgroups $\Gamma_L < \Delta_n$ we construct are \textit{thin} in the sense of Sarnak (see \cite{BO14}). Indeed, if such a subgroup $\Gamma_L$ were to be cocompact in $\mathrm{Isom}(\hh^n)$, then, since the boundary of a compact convex polyhedron in $\hh^n$ is topologically an $(n-1)$-sphere $S^{n-1}$, the complex $L$ would have to be an $n$-vertex triangulation of $S^{n-1}$, and no such triangulation exists.
\end{remark}

The following are applications of Theorem~\ref{thm:main}.

\begin{corollary}\label{cor:Jakobsche_space}
Let $d\leq 3$ and let $N$ be a closed connected  $d$-manifold admitting a triangulation with $n$ vertices. 
Then there is a convex cocompact reflection group in $\mathrm{Isom}(\mathbb{H}^{n})$ whose limit set is homeomorphic to the tree of manifolds $\mathcal X(N)$ (if $N$ is nonorientable) or $\mathcal X(N \# \overline N)$ (if $N$ is orientable).
\end{corollary}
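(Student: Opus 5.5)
Apply Theorem~\ref{thm:main} with $L$ an $n$-vertex triangulation of $N$. This gives a right-angled reflection subgroup $\Gamma_L<\Delta_n<\mathrm{Isom}(\hh^n)$ whose nerve is $L^\PS$. Since $\Delta_n$ is a cocompact lattice and $\Gamma_L$ is a finitely generated reflection group, $\Gamma_L$ is convex cocompact, as remarked after Theorem~\ref{thm:main}. Hence $\Lambda_{\Gamma_L}$ is equivariantly homeomorphic to the Gromov boundary $\partial \Gamma_L$ \cite{KAP09,SU79}. It therefore suffices to identify the Gromov boundary of the abstract right-angled Coxeter group $W_{L^\PS}$, whose nerve is a flag-no-square triangulation of $N$.

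For this I would invoke the known description of boundaries of hyperbolic right-angled Coxeter groups whose nerve is a closed $PL$ manifold; this is the theorem of Przytycki--\'{S}wi\k{a}tkowski \cite{PS09}, building on Dranishnikov \cite{DR99} in dimension $2$. Their result says that if the nerve is a flag-no-square triangulation of a closed $d$-manifold $N$ with $d\leq 3$, then the Gromov boundary is the tree of manifolds $\mathcal X(N)$ when $N$ is nonorientable, and $\mathcal X(N\#\overline N)$ when $N$ is orientable. The heuristic is as follows. The Davis complex is a union of copies of the cone over $N$, glued along the faces dual to the vertices. Near infinity, the boundary is assembled as an inverse limit of iterated connected sums of copies of $N$: each reflection wall contributes a connected sum, and reflecting reverses orientation. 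So in the orientable case the pieces alternate between $N$ and $\overline N$ and the building block becomes $N\#\overline N$, while in the nonorientable case $N\cong\overline N$ and the block is $N$ itself. Jakobsche's uniqueness theorem then pins down the inverse limit as $\mathcal X(\cdot)$, provided the gluing disks form a dense null family.

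\textbf{Degenerate cases.} If $d=0$, $N$ is a point, which does not admit a triangulation with $n\geq 2$ vertices as a connected closed manifold, unless $n=1$; there the statement is vacuous or trivial. For $d=1$, $N=S^1$ and $\mathcal X(S^1\#\overline{S^1})$ should be read as the Cantor-set/circle-tree convention of \cite{PS09}. I would simply cite their convention. The $n\leq 3$ case is covered by the dodecahedral remark.

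\textbf{Main obstacle.} The main obstacle is verifying that the hypotheses of the cited boundary theorem apply to $L^\PS$: that it is flag-no-square and still a $PL$ triangulation of $N$. Both follow, since $L^\PS$ is a subdivision of $L$ and is flag-no-square, as recovered after Theorem~\ref{thm:main}. I would check that the cited theorem requires nothing further.
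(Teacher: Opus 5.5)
Your argument is the same as the paper's: apply Theorem~\ref{thm:main} to an $n$-vertex triangulation $L$ of $N$, use convex cocompactness of $\Gamma_L$ to identify $\Lambda_{\Gamma_L}$ with the Gromov boundary of $W_{L^\PS}$, and then invoke the boundary theorem for right-angled Coxeter groups whose nerve is a flag-no-square triangulation of a closed manifold. The paper records that theorem as Lemma~\ref{lem:tree} and credits it to Fischer \cite{FI03} and \'{S}wi\k{a}tkowski \cite{SW20}, not to \cite{PS09}, and your $d=1$ remark is mistaken: no special convention is needed, since a connected sum of circles is a circle, so $\mathcal X(S^1)\cong S^1$ and the limit set is simply a circle (the boundary of a right-angled polygon reflection group).
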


See \S\ref{subsection:trees} for details on trees of manifolds (also known as {\em Jakobsche spaces}). In particular, using triangulations of the sphere $S^d$ with $n$ vertices, one obtains thin convex cocompact reflection groups acting on $\hh^n$ with limit set~$S^d$.
On the other hand, using triangulations of  the real projective plane, the torus,  and the Poincar\'e homology sphere, one obtains the following limit sets.
For Item~(\ref{item:h16}) in Corollary~\ref{cor:manifold}, we use a triangulation of the Poincar\'e homology $3$-sphere with $16$ vertices due to Bj\"{o}rner and Lutz \cite{BL00}, which is the triangulation with the smallest number of vertices known to the authors.

\begin{corollary}\label{cor:manifold}
    There are convex cocompact reflection groups
    \begin{enumerate}
        \item\label{item:h6} in $\mathrm{Isom} (\hh^6)$ whose limit set is the Pontryagin surface $\Pi_2$;

        \item\label{item:h7} in $\mathrm{Isom} (\hh^7)$ whose limit set is the orientable Pontryagin sphere;

        \item\label{item:h16} \label{item:kapovich} in $\mathrm{Isom} (\hh^{16})$ whose limit set is a \v{C}ech cohomology 3-sphere not homeomorphic to $S^3$.
        
    \end{enumerate}
\end{corollary}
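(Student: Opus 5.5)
The plan is to derive each item of Corollary~\ref{cor:manifold} from Corollary~\ref{cor:Jakobsche_space}. For each item we choose a closed manifold $N$ with a triangulation on the stated number of vertices, and then identify the resulting tree of manifolds $\mathcal X(N)$ or $\mathcal X(N\#\overline N)$ with the claimed compactum, using the known descriptions of trees of manifolds recalled in \S\ref{subsection:trees}.

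For Item~(\ref{item:h6}) we take $N=\mathbb{RP}^2$ with its minimal $6$-vertex triangulation, i.e.\ the hemi-icosahedron. Since $N$ is nonorientable, Corollary~\ref{cor:Jakobsche_space} yields a convex cocompact reflection group in $\mathrm{Isom}(\hh^6)$ whose limit set is $\mathcal X(\mathbb{RP}^2)$. This space is the Pontryagin surface $\Pi_2$ by the standard characterization of Pontryagin surfaces as trees of projective planes (Jakobsche; see also \cite{DR99} and \cite{PS09}).

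For Item~(\ref{item:h7}) we take $N=T^2$ with the M\"obius $7$-vertex triangulation. Since $N$ is orientable, the limit set is $\mathcal X(T^2\#\overline{T^2})=\mathcal X(\Sigma_2)$. This is the tree of genus-two surfaces, which is the orientable Pontryagin sphere. Here one should cite the fact that $\mathcal X(M)$ depends only on $M$ up to homeomorphism and that trees of closed orientable surfaces of positive genus are all homeomorphic to the Pontryagin sphere.

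For Item~(\ref{item:h16}) we take $N$ to be the Poincar\'e homology sphere $P$ with the $16$-vertex triangulation of Bj\"orner--Lutz \cite{BL00}. Since $P$ is orientable, the limit set is $X=\mathcal X(P\#\overline P)$, where $M=P\#\overline P$ is an integral homology $3$-sphere with nontrivial fundamental group.

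\begin{enumerate}
\item We show that $X$ is a \v{C}ech cohomology $3$-sphere. The tree of manifolds is an inverse limit of finite connected sums $M_k$ of copies of $M$. Each $M_k$ is a homology $3$-sphere, and by continuity of \v{C}ech cohomology under inverse limits we get $\check H^*(X)\cong\check H^*(S^3)$. Alternatively, the corresponding group is a virtual $PD(4)$ group (it acts on the Davis complex, which is a contractible homology manifold), and one concludes via \cite{BM91}.
\item We show that $X$ is not homeomorphic to $S^3$, and this is the main obstacle. The plan is to show that $X$ is not locally simply connected, or that it fails to be an ANR-manifold with the right local fundamental groups. Every open set of $X$ contains a copy of $M$ minus small balls, and this copy is a retract in a suitable sense, via the collapsing maps $X\to M_k\to M$. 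Hence arbitrarily small loops carry the nontrivial class from $\pi_1(P)$, which is perfect but nonzero, and they cannot be null-homotopic in small neighborhoods. So $X$ is not locally simply connected at any point, whereas $S^3$ is.
\end{enumerate}

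Alternatively, for the second step we would cite the known result (Jakobsche; Stallings-type arguments for trees of manifolds) that $\mathcal X(M)\not\cong S^3$ whenever $M$ is not simply connected.
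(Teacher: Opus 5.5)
Items~(1) and~(2) are exactly the paper's argument: apply Corollary~\ref{cor:Jakobsche_space} to the $6$-vertex $\mathbb{RP}^2$ and the $7$-vertex torus, and use the identifications recalled in \S\ref{subsection:trees}.

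For Item~(3) you take a genuinely different route. The paper never identifies the limit set as $\mathcal X(P\#\overline P)$. It combines Theorem~\ref{thm:main} with Lemma~\ref{lem:RACG homology 3-sphere}, which stays at the level of the group. Freedman's contractible $4$-manifold bounded by $P$ makes the commutator subgroup a $PD(4)$ group. Bestvina--Mess then gives the \v{C}ech cohomology sphere, and non-sphericity is quoted from Davis \cite[Proposition~9.4]{DA01}. You instead pass through Corollary~\ref{cor:Jakobsche_space} and analyse the inverse limit by hand. This avoids Freedman, Bestvina--Mess and Davis's proposition, and it exhibits the homeomorphism type explicitly. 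The price is that you must handle the topology of $\mathcal X(M)$ yourself. Your fallback for step~1 uses the proper cocompact action of $W_{L^{\PS}}$ on the contractible homology $4$-manifold $X_{L^{\PS}}$; it is a lighter version of the paper's argument that likewise avoids Freedman.

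Two steps need tightening.

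\emph{Step~1.} Continuity of \v{C}ech cohomology only gives $\check H^3(X)\cong\varinjlim H^3(M_k)$. You must add that each bonding map $M_{k+1}\to M_k$ collapses the attached copies of $M$ minus a ball onto balls, so it has degree one and induces an isomorphism on $H^3$. Without this, the direct limit of the $\zz$'s is not determined.

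\emph{Step~2.} The pieces inside a small open set of $X$ are not copies of $M$ minus finitely many small balls, because balls at later stages are removed densely. What every open set does contain is a punctured copy $M^\circ$: $M$ minus a dense null family of open balls with disjoint closures. On $M^\circ$, the projection $X\to M_k$ followed by the map $M_k\to M$ collapsing all other summands restricts to the inclusion $M^\circ\subset M$. You then need a loop in $M^\circ$ that is essential in $M$. You get one by pushing an essential loop radially off the balls; nullness of the family makes this continuous. Such a loop is then essential in $X$.

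A single such loop already shows $X\not\cong S^3$, so you do not need failure of local simple connectivity at every point. Alternatively, the bonding maps induce surjections on $\pi_1$ onto nontrivial groups. Hence the shape fundamental pro-group of $X$ is not pro-trivial, which is impossible for the simply connected ANR $S^3$.
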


Ma--Zheng \cite{MZ25} previously constructed   convex cocompact reflection groups in $\mathrm{Isom} (\hh^5)$ whose limit sets are the Pontryagin surfaces $\Pi_2$ and $\Pi_3$.
In \cite{DLMR26}, the authors exhibited a convex cocompact reflection group in $\mathrm{Isom} (\hh^4)$ whose limit set is the orientable Pontryagin sphere.

\begin{remark}
From Corollary~\ref{cor:Jakobsche_space}, one in fact obtains infinitely many pairwise non-homeomorphic \v{C}ech cohomology $3$-spheres as limit sets of convex cocompact groups.
This can for instance be achieved by letting $N$ vary among the Brieskorn homology $3$-spheres; see \cite[\S11]{JA91} and \cite{MI75}.
\end{remark}

For the next result, we use triangulations of the $p$-fold dunce hat; see \S\ref{sec:dunce} and Figure~\ref{fig:dunce_hat}.

\begin{corollary}\label{cor:pontryagin}
 For each prime $p \geq 2$, there is a convex cocompact reflection group in $\mathrm{Isom}(\mathbb{H}^{n})$ with
$n = \big\lceil \tfrac{3p}{2} \big\rceil + 3$ whose limit set is a Pontryagin surface $\Pi_p$.
\end{corollary}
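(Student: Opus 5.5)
Corollary~\ref{cor:pontryagin} follows from Theorem~\ref{thm:main} combined with the known identification of boundaries of right-angled Coxeter groups with Pontryagin surfaces. Recall that Dranishnikov showed the following: if $L$ is a simplicial $2$-complex such that $L^\PS$ is flag-no-square, $L$ is a pseudomanifold (every edge lies in exactly two triangles) off a suitable singular locus, and its integral/mod-$p$ homology has the appropriate form, then the Gromov boundary of the right-angled Coxeter group with nerve $L^\PS$ is the Pontryagin surface $\Pi_p$. The prototypical case is $L$ a triangulation of the $p$-fold dunce hat, that is, the Moore space $M(\mathbb Z/p,1)$, obtained by attaching a $2$-cell to a circle via a degree-$p$ map. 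This is the analogue, in the trees-of-manifolds setting of \S\ref{subsection:trees}, of how Corollary~\ref{cor:Jakobsche_space} is derived, and I would cite it as the mechanism identifying the boundary.

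Accordingly, the plan has three steps. First, exhibit a triangulation of the $p$-fold dunce hat $D_p$ with exactly $n=\lceil 3p/2\rceil+3$ vertices; Figure~\ref{fig:dunce_hat} presumably provides it. The construction would place the singular circle as a cycle and the $p$ sheets meeting along it, and a vertex count shows that roughly $3p/2$ vertices plus a bounded number of extra ones suffice. Second, apply Theorem~\ref{thm:main} with $L=D_p$ (dimension $2\le 3$) to get a right-angled reflection group $\Gamma_L<\Delta_n<\mathrm{Isom}(\hh^n)$ with nerve $L^\PS$. This group is convex cocompact, since it is finitely generated, reflection groups are geometrically finite, and $\Delta_n$ is cocompact. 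Third, observe that $\Lambda_{\Gamma_L}$ is equivariantly homeomorphic to $\partial\Gamma_L$, and that by the Dranishnikov / Przytycki--\'Swi\k{a}tkowski identification this boundary is $\Pi_p$.

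The main obstacle is the first step: realizing $D_p$ with only $\lceil 3p/2\rceil+3$ vertices while keeping a genuine simplicial complex, with no two simplices sharing the same vertex set. For the degree-$p$ attaching map, I would triangulate the singular circle as a triangle $a,b,c$. I would then build the $2$-cell as a disk whose boundary wraps $p$ times around $abc$, with interior vertices shared cleverly between consecutive wraps. Alternating pairs of wraps can share a new vertex, which explains the count $3p/2$. Finally, I would verify by hand that no edge is duplicated and that the link condition (local structure near the circle being $p$ half-disks) holds.

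A secondary check is that $L$ meets whatever hypotheses the boundary identification theorem requires, for instance that $L$ is the dunce hat itself and not merely a homotopy-equivalent complex. For $p=2$ one has $n=6$, consistent with item~(\ref{item:h6}) of Corollary~\ref{cor:manifold}, since the $2$-fold dunce hat is $\mathbb{RP}^2$ and has the $6$-vertex triangulation.
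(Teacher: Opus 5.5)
Your proposal is correct and follows the same route as the paper. The paper likewise takes a triangulation $L$ of $\widehat M_p$ on $\lceil 3p/2\rceil+3$ vertices, exhibited only for $p=2,3,5$ in Figure~\ref{fig:dunce_hat}; applies Theorem~\ref{thm:main} to obtain a convex cocompact reflection group in $\mathrm{Isom}(\hh^n)$ with nerve $L^\PS$; and identifies the limit set with $\Pi_p$ via Lemma~\ref{lem:pontryagin}, applied to the flag-no-square triangulation $L^\PS$ of $\widehat M_p$. That lemma, which assumes $p$ prime and where primality is used, is the only boundary-identification input needed, so your vaguer pseudomanifold and homology hypotheses need not be checked separately.
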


We also show that many of the groups obtained in Corollary~\ref{cor:Jakobsche_space} admit convex cocompact reflection subgroups with limit set a Menger curve; see \cite[Proposition 5.1]{DLMR26} and Proposition~\ref{prop:menger}.

We remark that the dimension $n$ in the construction of Theorem~\ref{thm:main} has no reason to be optimal. 
For example, for $d \leq 3$, the minimal number of vertices in a triangulation of the $d$-sphere is $d+2$, and so Theorem~\ref{thm:main} produces a subgroup of $\mathrm{Isom} (\hh^{d+2})$ whose limit set is a $d$-sphere within $\partial_\infty \mathrm{\hh^{d+2}}$. However, any cocompact lattice in $\mathrm{Isom}(\hh^{d+1})$, which in these dimensions may be taken to be a right-angled reflection group, has limit set $S^d$.

Similarly, we expect that there is a convex cocompact subgroup of $\mathrm{Isom}(\mathbb{H}^{d})$ for some integer $d < 16$ whose limit set is a \v{C}ech cohomology sphere not homeomorphic to the standard sphere. 
Indeed, each homology 3-sphere admits a locally flat embedding in $S^4$ by a result of Freedman \cite[Theorem 1.4]{FR82}.
Hence, using \cite[Example 9.1]{JA91}, one obtains that the \v{C}ech cohomology 3-sphere given by Corollary~\ref{cor:manifold} embeds in $S^4$. 
A natural question for future research is the following.

\begin{question}
Let $X\subseteq \Sb^d = \partial_\infty \hh^{d+1}$ and suppose there is a convex cocompact subgroup of $\mathrm{Isom}(\mathbb{H}^{n})$ for some integer $n \geq 4$ with limit set homeomorphic to $X$.
Is there a convex cocompact subgroup of $\mathrm{Isom}(\mathbb{H}^{d+1})$  with limit set homeomorphic to $X$?
\end{question}

Note that if $\Gamma$ is a convex cocompact subgroup of $\mathrm{Isom}(\mathbb{H}^{3})$ that does not split over virtually cyclic subgroups, then 
the topological type of $\Lambda_\Gamma$
is well understood. Namely, the limit set $\Lambda_\Gamma$ must either be a circle, a Sierpi\'nski carpet, or $S^2$ itself (see \cite{KK00}). For a survey on existing knowledge regarding which Gromov-hyperbolic (Coxeter) groups can be realized as convex cocompact (reflection) subgroups of $\mathrm{Isom}(\hh^n)$, we refer the reader to~\cite[\S4.2.8]{DA24}.

\subsection*{Acknowledgements} 
We thank Mike Davis for his interest in this work. We are also grateful to Jiming Ma for sharing his preprints on recent joint work with Fangting Zheng concerning convex cocompact subgroups with Pontryagin surface limit sets $\Pi_2$ and $\Pi_3$, as well as his independent work on $2$-dimensional Menger compactum limit sets.

This material is based upon work supported by the National Science Foundation under Grant No.~DMS-2424139, while S.D. and G.L. were in residence at the Simons Laufer Mathematical Sciences Institute in Berkeley, California, during the Spring 2026 semester. 
G.L. was supported by the National Research Foundation of Korea(NRF) grant funded by the Korea government(MSIT) (No.~RS-2026-25491130). L.M. acknowledges support by the Centre Henri Lebesgue (ANR-11-LABX-0020 LEBESGUE),  ANR G\'eom\'etries de Hilbert sur tout corps valu\'e (ANR-23-CE40-0012) and  ANR Groupes Op\'erant sur des FRactales (ANR-22-CE40-0004).
L.R. acknowledges support by INDAM-GNSAGA.


\section{Preliminaries}
\subsection{Simplicial complexes and the subdivisions of Dranishnikov and Przytycki--\'{S}wi\k{a}tkowski}\label{sec:simplicial subs}
Let $L$ be a finite simplicial complex. 
We will use the following terminology and notation:
\begin{itemize}
    \item The $d$-\textit{skeleton} of $L$ is denoted by  $L^{(d)}$.

    \item If $K\subseteq L$ is a subcomplex, we say that $K$ is \textit{full} (or \textit{induced}) if  whenever $d+1$ vertices of $K$ span a $d$-simplex in $L$, they also span a $d$-simplex in $K$.

    \item $L$ is \textit{flag} if any collection of $d+1$ pairwise adjacent vertices spans a $d$-simplex in $L$. The complex $L$ is \textit{flag-no-square} if $L$ is flag and has no induced squares (i.e., every square has a chord). A full subcomplex of a flag (respectively, flag-no-square) complex is flag (respectively, flag-no-square).
\end{itemize}

Dranishnikov \cite{DR99} introduced a subdivision procedure to turn every 2-dimensional simplicial complex into a 2-dimensional flag-no-square simplicial complex.
This procedure consists of
adding a midpoint to every edge and then subdividing each triangle as shown in Figure~\ref{fig:dran}. Note that the complex on the right-hand side of the picture can be obtained by fixing a triangle $\tau$ in the standard icosahedron $I$ and then taking the full subcomplex of  $I$ spanned by all the vertices not contained in $\tau$.
If $L$ is 2-dimensional, then we denote by $L^\PS$ the flag-no-square complex obtained by applying this subdivision procedure.
\begin{figure}[ht]
\centering
\def\svgwidth{.5\columnwidth}
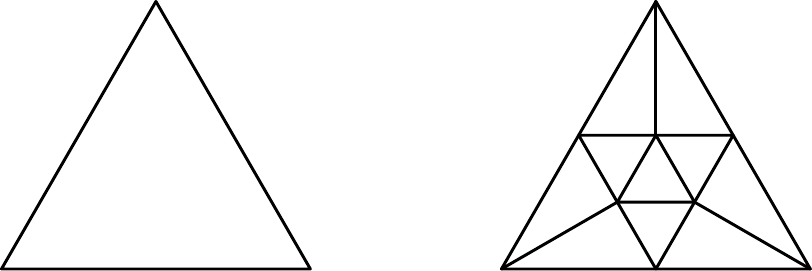  
\caption{Dranishnikov's subdivision procedure for 2-dimensional simplicial complexes.} 
\label{fig:dran}
\end{figure}

Dranishnikov's subdivision was generalized by Przytycki--\'{S}wi\k{a}tkowski  \cite{PS09} for simplicial complexes of dimension $\leqslant 3$.
In the latter procedure,  one applies Dranishnikov's subdivision to the $2$-skeleton and then
subdivides each tetrahedron into a copy of the $3$-dimensional complex obtained  as follows: Fix a tetrahedron $\tau$ in the 600-cell $\six$ and then take the full subcomplex of $\six$ spanned by all the vertices not contained in $\tau$.
If $L$ is 3-dimensional, then we continue to denote the Przytycki--\'{S}wi\k{a}tkowski subdivision of $L$ by $L^\PS$.

Note that both the icosahedron $I$ and the 600-cell $\six$ are dual to compact right-angled polyhedra in real hyperbolic space, namely, the dodecahedron and the 120-cell respectively (note such polyhedra cease to exist in dimensions higher than $4$; see \cite{PV05}). 
The Gromov hyperbolicity of the associated reflection groups is manifested in the fact that $I$ and $\six$ are flag-no-square; see \S\ref{sec:racg}. In the language of right-angled Coxeter groups (RACGs) introduced below, one can think of these subdivision procedures as a way to ``hyperbolize'' given RACGs using classical real hyperbolic right-angled reflection groups to produce  (abstract) Gromov-hyperbolic RACGs. Our goal will be to realize the output groups geometrically as concrete reflection groups acting on $\hh^n$.


\subsection{Right-angled Coxeter groups}\label{subsection:Coxeter_group}\label{sec:racg}
Given a flag simplicial complex $L$, the \textit{right-angled Coxeter group} (RACG) defined by $L$ is the group $W_L$ generated by one involution for each vertex of $L$, with two generators commuting precisely when the corresponding vertices are adjacent. In other words, the group $W_L$ is given by the presentation
$$
W_L=\langle s\in L^{(0)} \mid s^2=1, \; [s,t]=1 \textrm{ for all edges } (s,t) \textrm{ of }  L \rangle.
$$
The reader is referred to  \cite{DA08} or \cite{DA24} for background.
The flag complex $L$ is called the \textit{nerve} of the group $W_L$ (and depends only on the isomorphism type of $W_L$; see \cite{RA03}). 
Note that $W_L$ is completely determined by the $1$-skeleton of $L$. 
It is well-known that $W_L$ is Gromov-hyperbolic if and only if $L$ is flag-no-square; see \cite{moussong,DGK18}.
Moreover, for any full subcomplex $K\subseteq L$, 
the subgroups generated by $\langle K^{(0)}\rangle$ is naturally isomorphic to $W_K$ and is quasiconvex (with respect to the generating set $L^{(0)}$ for $W_L$). We call $W_K$ the \textit{standard subgroup} defined by the full subcomplex $K$.

The following argument is well known to experts; see \cite[\S9]{DA01} or \cite[Example~4.35]{DA24}. We sketch it for the reader's convenience.

 \begin{lemma}\label{lem:RACG homology 3-sphere}
     Let $L$ be a flag-no-square triangulation of an (integral) homology $3$-sphere $N$. 
     Then the Gromov boundary of $W_L$ is a \v Cech cohomology 3-sphere $S$. Moreover, we have that $S \cong S^3$ if and only if $N \cong S^3$.
 \end{lemma}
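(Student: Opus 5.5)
Throughout, I would work with the Davis complex $\Sigma_L$ of $W_L$. Since $L$ is a flag triangulation of a homology $3$-manifold, $\Sigma_L$ is a contractible homology $4$-manifold; it is a genuine manifold away from the vertices, and at the vertices the links are copies of $L$. Since $L$ is flag-no-square, Moussong's theorem gives $\Sigma_L$ a $W_L$-invariant CAT$(-1)$ metric (piecewise hyperbolic, with right-angled cubes realized as hyperbolic cells). Hence $\partial W_L$ is homeomorphic to the visual boundary $\partial\Sigma_L$.

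For the first assertion I would use the Bestvina--Mess formula $\check H^{k}(\partial W_L)\cong H^{k+1}_c(W_L;\mathbb Z W_L)$, valid for any hyperbolic group of type $F$ after passing to a torsion-free finite-index subgroup $\Gamma$. Because $\Sigma_L$ is a contractible homology $4$-manifold, $\Gamma\backslash\Sigma_L$ is a closed orientable (after passing to a further finite-index subgroup) homology $4$-manifold, so $\Gamma$ is a $PD^4$ group. It follows that $H^*(\Gamma;\mathbb Z\Gamma)$ is $\mathbb Z$ in degree $4$ and vanishes otherwise. Equivalently, and more concretely, one can compute $H^*_c(\Sigma_L)$ by Poincar\'e duality for the homology manifold. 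Either way, $\check H^*(\partial W_L)\cong \check H^*(S^3)$, so $S=\partial W_L$ is a \v Cech cohomology $3$-sphere. An alternative description I could quote is Davis's formula $H^*_c(\Sigma_L)\cong\bigoplus_w H^*(K,K^{\mathrm{Out}(w)})$; the homology-sphere hypothesis on $L$ is what makes all the relative terms vanish except for the one term corresponding to $K$ with $K^{\mathrm{Out}}=\partial K$.

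For the dichotomy, first suppose $N\cong S^3$. Then $\Sigma_L$ is a genuine $4$-manifold. Since it is CAT$(-1)$ and has no boundary, geodesics extend, and the exponential map from any point gives a homeomorphism of $\Sigma_L\cup\partial\Sigma_L$ onto a closed $4$-ball, with $\partial\Sigma_L\cong S^3$. This is the Davis--Januszkiewicz type argument, and it uses local conicality of CAT$(0)$ manifolds.

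Conversely, suppose $N\not\cong S^3$. I expect this direction to be the main obstacle. The plan is to detect the non-manifold point in the boundary through the local topology near the vertices of $\Sigma_L$. Since $N$ is a homology sphere that is not homeomorphic to $S^3$, the Poincar\'e conjecture tells us that $\pi_1(N)\neq 1$. The geodesic ray retraction should show that $\partial\Sigma_L$ fails to be locally simply connected, or at least fails to be an ANR-manifold that is a $3$-sphere. Concretely, I would follow \cite[\S9]{DA01}. A vertex link $L$ embeds in $\Sigma_L$ as a small sphere around the vertex $v$. Radial projection from a point $x$ near $v$ carries this sphere into the boundary, and translates of $v$ accumulate at every boundary point. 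This yields arbitrarily small neighbourhoods of boundary points that contain copies of $N$ which cannot be null-homotoped within slightly larger neighbourhoods, because $\pi_1(N)$ is perfect and nontrivial. Consequently $\partial W_L$ is not locally simply connected, whereas $S^3$ is. The delicate point is to make the projection control precise, using convexity of balls in the CAT$(-1)$ metric. An alternative route is to identify the boundary with a Jakobsche-type tree of manifolds $\mathcal X(N\#\overline N)$, to be treated in \S\ref{subsection:trees}, and to cite that such a space is not $S^3$ when $N\not\cong S^3$.
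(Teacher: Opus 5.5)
Your proof that $\partial W_L$ is a \v{C}ech cohomology $3$-sphere is correct, and it takes a more economical route than the paper. You get the $PD^4$ property directly from Poincar\'e duality for the contractible homology $4$-manifold $\Sigma_L$ (equivalently, from Davis's formula for $H^*_c(\Sigma_L)$). The paper instead uses Freedman's theorem to replace the cone neighbourhoods of the singular vertices of $P=W_L'\backslash X_L$ by compact contractible $4$-manifolds, which yields a genuine closed $4$-manifold homotopy equivalent to $P$. Your version avoids Freedman and works for any generalized homology sphere nerve; the paper's lets it quote duality for closed manifolds.

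The gap is in the implication $N\not\cong S^3\Rightarrow S\not\cong S^3$, which the paper takes from \cite[Proposition~9.4]{DA01}. Your mechanism fails at its first step. There is no ``radial projection'' carrying the small sphere $S(v,\epsilon)\cong N$ into $\partial\Sigma_L$, because geodesics in $\Sigma_L$ extend but not uniquely. They can branch whenever they cross a cell of codimension $\geq 2$, since the links of such cells (copies of $L$, of vertex links of $L$, of edge links of $L$, with the piecewise spherical metric) are strictly larger than round spheres. So a point of $S(v,\epsilon)$ does not determine a point at infinity. The only canonical continuous map goes the other way: the geodesic retraction $\partial\Sigma_L\to S(v,\epsilon)\cong N$, which is surjective but has no evident section. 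Consequently you never produce small loops in $\partial\Sigma_L$ representing nontrivial elements of $\pi_1(N)$, and the claimed failure of local simple connectivity is unproved. (Perfectness of $\pi_1(N)$ plays no role; only $\pi_1(N)\neq 1$ matters.)

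A workable version uses the retraction in the correct direction. Write $\partial\Sigma_L=\varprojlim_r S(v,r)$ under geodesic retractions. As recalled informally in \S\ref{subsection:trees}, each time $r$ crosses a vertex, the sphere changes, up to homotopy, by a connected sum with a copy of $N$, and the retraction collapses that summand. Hence the bonding maps are $\pi_1$-surjective and every $\pi_1(S(v,r))$ surjects onto $\pi_1(N)\neq 1$. The pro-fundamental group is then not pro-trivial, which is impossible if $\partial\Sigma_L$ were the simply connected compact ANR $S^3$.

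Your second suggestion also works and stays within the paper: Lemma~\ref{lem:tree} gives $\partial W_L\cong\mathcal X(N\#\overline N)$. Since $\pi_1(N\#\overline N)\cong\pi_1(N)*\pi_1(\overline N)\neq 1$, this tree of manifolds is not a manifold.

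The same branching problem invalidates your justification of $N\cong S^3\Rightarrow S\cong S^3$, a direction the paper leaves implicit. With branching geodesics there is no well-defined exponential map, and geodesic completeness alone does not identify $\Sigma_L\cup\partial\Sigma_L$ with a ball. That direction should come from the Davis--Januszkiewicz theorem for CAT$(0)$ polyhedral PL manifolds (every triangulation of a $3$-manifold is PL), or again from Lemma~\ref{lem:tree}, since $\mathcal X(S^3)\cong S^3$.
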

 \begin{proof}
     The Davis complex $X_L$ of $W_L$ is a 4-dimensional CAT(0) cube complex. The link of each vertex of $X_L$ is a copy of $N$.
     The commutator subgroup  $W_L'$ of $W_L$ acts freely and cocompactly on the Davis complex, and the quotient $P$ is a closed 4-pseudomanifold. Each vertex of $P$ has a neighborhood homeomorphic to the cone over~$N$.
     Since $N$ is a 3-manifold, the complement in $P$ of the vertex set of $P$ is a 4-manifold.
     
     By Freedman's work,  the homology $3$-sphere $N$ bounds a compact contractible 4-manifold $B$; see \cite[Theorem 1.4']{FR82}.
     If we replace a small neighborhood of every vertex of $P$ with a copy of $B$, we obtain a closed 4-manifold  that is homotopy equivalent to $P$, so that $W_L'$ is a $4$-dimensional Poincar\'e duality group. It then follows from work of Bestvina--Mess \cite[Corollary 1.3(c)]{BM91} that the Gromov boundary $S$ of $W_L'$ is a \v Cech cohomology 3-sphere.
     Since $W_L'$ has finite index in $W_L$, they share the same Gromov boundary $S$.
    Moreover, 
    if $N \not \cong S^3$, then $S \not \cong S^3$, as proved by Davis \cite[Proposition 9.4]{DA01}.
 \end{proof}

The RACGs considered in this paper arise  concretely as {\em right-angled hyperbolic reflection groups}, that is, as discrete groups generated by reflections in the codimension-$1$ faces of certain right-angled polyhedra in real hyperbolic $n$-space $\hh^n$. 
The \textit{limit set} $\Lambda_\Gamma$ of a discrete subgroup $\Gamma < \mathrm{Isom}(\hh^n)$ is the closed subset of $\Sb^{n-1} = \partial_\infty \hh^n$ consisting of all accumulation points of the $\Gamma$-orbit of some (any) point of $\hh^n$.
We say that $\Gamma$ is \textit{convex cocompact} if $\Gamma$ acts cocompactly on some closed convex $\Gamma$-invariant subset of $\hh^n$.
If $\Gamma$ is convex cocompact, then $\Gamma$ is Gromov-hyperbolic as an abstract group, and its limit set $\Lambda_\Gamma$ is equivariantly homeomorphic to the Gromov boundary of $\Gamma$. In particular, the topological type of the limit set of a convex cocompact right-angled hyperbolic reflection group $\Gamma$ is entirely determined by the nerve of $\Gamma$ as an abstract RACG.


\subsection{Some low-dimensional continua}\label{sec:continua}
The purpose of this section is to define the continua that appear as Gromov boundaries of certain Gromov-hyperbolic RACGs whose nerve is not a sphere.
(That the Coxeter groups in this discussion be  Gromov-hyperbolic or right-angled is not relevant here, and one has identical statements for arbitrary finitely generated Coxeter groups and the visual boundaries of their Davis complexes. In this paper, we are however only interested in the hyperbolic and right-angled case.)

\subsubsection{Trees of manifolds}\label{subsection:trees} 
Trees of manifolds were introduced by Jakobsche \cite{JA91}; see also \cite{SW20,SW20sp}, whose notation we follow.
 Roughly speaking, given a closed connected manifold $N$, the tree of manifolds $\mathcal X(N)$ is obtained as an inverse limit of connected sums of copies of $N$.
 When $N=S^d$ is a sphere, one has $\mathcal X(N)\cong S^d$, 
 but when $N\not\cong S^d$, the  tree of manifolds $\mathcal X(N)$ fails to be a manifold.
 For instance, if $N$ is an orientable surface of positive genus, then~$\mathcal X(N\# \overline N)$ is the so-called Pontryagin sphere, and if $N$ is a nonorientable surface, then~$\mathcal X(N)$ is the nonorientable Pontryagin sphere.

Trees of manifolds arise as boundaries of Coxeter groups with manifold nerve as follows (see   \cite{FI03,SW20}).
If $L$ is a flag-no-square triangulation of a closed connected manifold $N$, then  the Davis complex $X_L$ of $W_L$ is a pseudomanifold with isolated vertex singularities, where the link of each vertex is homeomorphic to $L$. 
The idea is then to realize the Gromov boundary of $W_L$ as the visual boundary of $X_L$, which can be computed as the inverse limit of an exhausting sequence of metric spheres centered at a point $o \in X_L$.
Informally, as the metric sphere centered at $o$ expands, each encounter with a vertex of $X_L$ contributes a connected sum with the link (a copy of $N$) of that vertex.

\begin{lemma}[Theorem 3.7 in \cite{FI03}, Theorem 2 in \cite{SW20}]\label{lem:tree}
 Suppose $L$ is a flag-no-square triangulation of a closed connected manifold $N$.
 Then the Gromov boundary of $W_L$ is homeomorphic to the tree of manifolds $\mathcal X(N)$ (if $N$ is non-orientable) or $\mathcal X(N\# \overline N)$ (if $N$ is orientable).
\end{lemma}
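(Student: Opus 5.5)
This lemma is quoted from Fischer \cite{FI03} and Świątkowski \cite{SW20}. A proof sketch would follow the outline given just before the statement: realize the Gromov boundary of $W_L$ as the visual boundary of the Davis complex $X_L$, and then identify that visual boundary with a Jakobsche inverse limit.

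\textbf{Step 1: the Davis complex and its boundary.} Since $L$ is flag-no-square, $W_L$ is Gromov-hyperbolic. It acts geometrically on the Davis complex $X_L$, which is a CAT(0) cube complex. Hence $\partial W_L$ is homeomorphic to the visual boundary $\partial X_L$. Because $L$ is a closed $d$-manifold, $X_L$ is a $(d+1)$-dimensional pseudomanifold whose only singularities are its vertices, and each vertex link is a copy of $L\cong N$. Away from the vertices, $X_L$ is a manifold.

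\textbf{Step 2: the boundary as an inverse limit.} Fix a generic basepoint $o$ and exhaust $X_L$ by balls $B_k$. The cleanest choice is combinatorial: take $B_k$ to be the union of the chambers $wK$ with word length $|w|\leq k$, where $K$ is the cone on the barycentric subdivision of $L$. Then $\partial X_L=\varprojlim \partial B_k$, with the bonding maps given by geodesic retraction toward $o$; CAT(0) convexity makes these maps well-defined.

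The inductive step passes from $\partial B_k$ to $\partial B_{k+1}$. This adds the chambers $wK$ with $|w|=k+1$, grouped according to the cosets of the finite special subgroups $W_\sigma$. Going around a newly engulfed vertex $v$ of $X_L$ replaces a small ball in the current sphere with the complement of a ball in the link of $v$, which is a copy of $N$. So
$$\partial B_{k+1}\cong \partial B_k \,\#\, N\,\#\cdots\#\, N.$$
The connected sums are taken along a null family of disjoint collared balls, and the bonding map collapses each summand $N\setminus\mathrm{ball}$ back to the ball it replaced.

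\textbf{Step 3: orientations and the recognition theorem.} Next, check the orientation behaviour. If $N$ is orientable, reflections reverse orientation. Adjacent copies of the link are therefore glued with opposite orientations, so the building block is effectively $N\#\overline N$. If $N$ is nonorientable, orientation is irrelevant. Then verify Jakobsche's conditions:
\begin{itemize}
\item the diameters of the new summands tend to zero, which follows from hyperbolicity (exponential shrinking of visual shadows);
\item the attaching balls become dense, since every point of $\partial X_L$ is a limit of engulfed vertices by cocompactness.
\end{itemize}
Jakobsche's uniqueness theorem \cite{JA91} (see also \cite{SW20}) then identifies the inverse limit with $\mathcal X(N)$ or $\mathcal X(N\#\overline N)$ respectively.

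\textbf{Main obstacle.} The delicate step is Step 2. Several vertices may enter the sphere simultaneously, and their modifications must occur in disjoint balls that stay tame, so that the connected-sum description is valid. The plan is to use the fact that, in the combinatorial exhaustion, distinct new vertices correspond to distinct cosets $wW_\sigma$ with $\sigma$ a maximal simplex. Disjointness would then follow from the no-square condition, which prevents two such cosets from sharing boundary in a problematic way. For the remaining verifications, I would defer to \cite{FI03,SW20}.
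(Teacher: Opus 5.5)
Your overall strategy is the heuristic the paper gives just before the statement; the paper itself proves nothing here and simply cites \cite{FI03,SW20}. However, the step you single out as the main obstacle is resolved incorrectly, and your Step~2 description (connected sums along disjoint balls) is wrong for combinatorial balls.

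A chamber $wK$ with $|w|=k+1$ meets $B_k$ in $wK^{\mathrm{In}(w)}=\bigcup_{s\in\mathrm{In}(w)}wK_s$, and these attaching regions are \emph{not} pairwise disjoint.
\begin{itemize}
\item For $k=0$, the chambers $sK$ attach along the mirrors $K_s$. These tile all of $\partial K\cong N$ and overlap in $K_{\{s,t\}}$ for every edge $\{s,t\}$ of $L$.
\item In general, suppose $s\in\mathrm{In}(w)$ and $t\notin\mathrm{In}(w)$ commute. Then $wK$ and $wstK$ both have length $k+1$. They attach to the same chamber $wsK$ along $wsK_s$ and $wsK_t$, which meet in a codimension-one subset of $\partial B_k$.
\end{itemize}
This involves only edges of $L$, never squares, so the no-square condition cannot give disjointness. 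Also, the newly engulfed singular vertices are the cone points $w$ of the new chambers, not cosets $wW_\sigma$ of maximal simplices (those are top-dimensional cubes). Consequently the natural attaching balls overlap. The radial bonding maps are therefore not of the ``collapse each summand to its ball, along disjoint balls'' form in which you phrase the input to Jakobsche's recognition theorem.

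What does hold is a sequential statement: attach the chambers of length $k+1$ one at a time. If $1\neq u\in W_\sigma$ and $|wu|\leq |w|$, then $\mathrm{supp}(u)\cap\mathrm{In}(w)\neq\varnothing$. Hence each new chamber meets $B_k$ together with the previously attached chambers exactly in $wK^{\mathrm{In}(w)}$. This set is the derived neighbourhood of the simplex $\mathrm{In}(w)$ in $L$, and it is a $d$-ball only because the triangulation is PL. That is automatic for $d\leq 3$ but otherwise an extra hypothesis you are silently using.

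This gives $\partial B_{k+1}\cong\partial B_k\#N\#\cdots\#N$ abstractly. Turning the resulting inverse system into one satisfying Jakobsche's hypotheses is exactly the nontrivial content of \cite{FI03,SW20}. Alternatively, work with metric spheres about $o$, as in the paper's heuristic: simultaneously engulfed vertices are then at distance at least $1$ apart, so the modifications are genuinely local and disjoint.

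A smaller correction: $B_k$ is not convex. For $k=1$, around each codimension-two face $K_{\{s,t\}}$ it contains $K$, $sK$, $tK$ but not $stK$, giving a reflex corner. So well-definedness of the retraction does not come from convexity of $B_k$. It comes from convexity of the walls and their half-spaces: along a geodesic from $o\in\mathrm{int}K$, the number of walls separating the point from $K$ is nondecreasing, so each ray exits $B_k$ exactly once.
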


\subsubsection{Pontryagin surfaces}\label{sec:dunce}
Fix an integer $p\geq 2$. 
Let $M_p$ be the mapping cylinder of the degree $p$ cover $f_p:S^1\to S^1,f(z)=z^p$, i.e.,
$$ M_p =\left( ([0,1]\times S^1) \sqcup S^1 \right) / \sim,$$
where $\sim$ is the equivalence relation generated by $(1,x)\sim f(x)$ for all $x\in S^1$.
The \textit{boundary} of $M_p$ is the circle given by the image of $\{0\}\times S^1$ in $M_p$. 
Note that $M_2$ is a M\"obius band, while  for $p\geq 3$ the space $M_p$ is neither a surface nor a planar set.
The \textit{core} of $M_p$  is the circle given by the image of $\{1\}\times S^1$  in $M_p$. 
Note that collapsing the core of $M_p$ to a point results in a 2-disk.

We now define a sequence $\Sigma_k$ of 2-dimensional spaces as follows: we set $\Sigma_1=S^2$, and $\Sigma_{k+1}$ is obtained by triangulating $\Sigma_k$ and then replacing each 2-simplex of $\Sigma_k$ with a copy of $M_p$, gluing along their boundaries.
Collapsing the core of each copy of $M_p$ to a point provides a natural collapsing map $q_k^{k+1}:\Sigma_{k+1}\to \Sigma_k$.
This defines an inverse system, and the \textit{Pontryagin surface} $\Pi_p$ is defined as the associated inverse limit.
Since any two triangulations of $\Sigma_k$ admit a common refinement (see  \cite{Hauptbook} and references therein), inverse limit does not depend on the triangulations chosen at each step.
We note that $\Pi_p$ is not a manifold, but $\Pi_2$ is the nonorientable Pontryagin sphere, i.e., the  tree of manifolds $\mathcal X(\mathbb{RP}^2)$.
According to \cite[Example 1.9, Theorem 1.10]{DR05}, the Pontryagin surfaces $\Pi_p$ do not embed in $\rr^3$.

Pontryagin surfaces appear as boundaries of Coxeter groups as follows.
Let $\widehat M_p$ be the $p$-fold dunce hat, i.e., the 2-complex obtained by gluing a disk to the boundary of $M_p$. 
The space $\widehat M_p$ is a Moore space 
$M(\zz/p\zz,1)$.

\begin{lemma}[{Proof of \cite[Corollary 3]{DR97}}]\label{lem:pontryagin}
 Let $p\geq 2$ be prime, and let $L$ be a flag-no-square triangulation of~$\widehat M_p$.
 Then the Gromov boundary of $W_L$ is homeomorphic to the Pontryagin surface $\Pi_p$.
\end{lemma}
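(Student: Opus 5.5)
The strategy is to run Dranishnikov's original argument from \cite{DR97}. Write the Gromov boundary as an inverse limit of $2$-dimensional polyhedra built from the Davis complex. Then recognize this inverse system as the Pontryagin system $\Sigma_k$ of \S\ref{sec:dunce}: at each stage, a $2$-cell gets replaced by a copy of $M_p$.

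\textbf{Setting up the inverse system.} Since $L$ is flag-no-square, $W_L$ is Gromov-hyperbolic. Its Davis complex $X_L$ is a $3$-dimensional CAT(0) cube complex in which the link of each vertex is $L$. Hence $\partial W_L$ is the visual boundary $\partial X_L$. I will realize $\partial X_L$ as the inverse limit of metric spheres $S_r$ around a base vertex $o$, or more combinatorially, of the boundaries of an exhaustion $B_1\subset B_2\subset\cdots$ of $X_L$ by convex subcomplexes, each obtained from the previous one by attaching chambers (equivalently, growing word-length balls in $W_L$). Each transition map is the geodesic projection $\partial B_{k+1}\to\partial B_k$.

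\textbf{Identifying the local model.} When the sphere passes a new vertex $v$ of $X_L$, the local change is governed by the link $\mathrm{lk}(v)=L\cong\widehat M_p$. The part of $\partial B_k$ near $v$ is a disk, namely a cone neighbourhood of the part of $L$ already "seen". It is replaced by $L$ minus that open disk, glued along the boundary circle. Since $\widehat M_p$ is a disk glued to $M_p$, removing a disk leaves something homotopy equivalent to $M_p$; after choosing the disk compatibly, it is PL-homeomorphic to $M_p$ with the disk's boundary circle matching $\partial M_p$. The projection $\partial B_{k+1}\to\partial B_k$ collapses this $M_p$ onto the disk, which is exactly the map $q_k^{k+1}$. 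So up to cell-like (near-homeomorphism) modifications, the system $\{\partial B_k\}$ is a Pontryagin-type system: each step replaces disks by copies of $M_p$. In addition, $\partial B_1$ should be a $2$-sphere, or at least the $2$-dimensional start from which the standard argument proceeds.

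\textbf{Identifying the limit, and the main obstacle.} The main obstacle is that the new vertices encountered at a stage do not neatly correspond to the $2$-simplices of a triangulation of $\partial B_k$ with mesh tending to $0$. Several vertices can be encountered simultaneously, and the disks they remove may overlap or meet along edges. To get around this, I would invoke the characterization used by Dranishnikov, citing his proof in \cite{DR97} (compare \cite{FI03,SW20} for the manifold analogue, Lemma~\ref{lem:tree}). The relevant inverse limit is determined up to homeomorphism by three conditions: it is $2$-dimensional; the replacements are by $M_p$ with $p$ prime; and the replaced disks become dense with mesh $\to0$, which follows from hyperbolicity and cocompactness. Primality of $p$ enters here. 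It ensures that the cohomological dimension computations (such as $\dim_{\zz/p}=2$ and $\dim_{\mathbb Q}=1$) and the uniqueness argument match those of $\Pi_p$, with no extra torsion coming from other primes. Under these hypotheses, any two such systems have homeomorphic limits by a Bing-shrinking / approximation argument, and therefore $\partial W_L\cong\Pi_p$.
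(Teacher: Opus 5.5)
The paper gives no argument of its own here. It imports the lemma from the proof of Corollary~3 in \cite{DR97}, so leaning on Dranishnikov is consistent with the paper. The argument you build around that citation, however, breaks at its local model, and for $p\geq 3$ it omits the essential feature.

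When the ball grows past a vertex $w$, the region of the old sphere that gets replaced is dictated by the descent simplex $\sigma=\mathrm{In}(w)$. It is a regular neighbourhood $N(\sigma)$ of $\sigma$ in $L$ (in the chamber picture, the union of the mirrors $K_s$, $s\in\sigma$), and it is replaced by $\overline{L\setminus N(\sigma)}$. So there is no freedom to ``choose the disk compatibly''. Moreover, $N(\sigma)$ is a disk with complement $M_p$ only when $\sigma$ avoids the core.

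For $p\geq 3$ the core is a singular circle: near a core point, $\widehat M_p$ is $p$ half-disks glued along a diameter. The core is therefore a subcomplex of $L$, and each core vertex $s$ occurs as a descent set, $\mathrm{In}(s)=\{s\}$. For such $s$:
\begin{itemize}
\item $N(s)$ is the cone on $\mathrm{lk}_L(s)\cong\theta_p$ (two points joined by $p$ arcs), not a disk;
\item $\overline{L\setminus N(s)}$ is a disk with $p$ boundary arcs identified to a single arc, hence homotopy equivalent to $\bigvee^{p-1}S^1$ rather than to $M_p\simeq S^1$.
\end{itemize}
Consequently the spheres $\partial B_k$ are singular $2$-polyhedra from the outset; the link of the base vertex is already $L\cong\widehat M_p$, not $S^2$. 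The system $\{\partial B_k\}$ is therefore not a system of disk-by-$M_p$ replacements. What you describe is the connected-sum mechanism for manifold nerves. That covers only $p=2$, where $\widehat M_2=\mathbb{RP}^2$ and Lemma~\ref{lem:tree} already applies.

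The closing step does not repair this. The uniqueness principle you invoke is never stated precisely, and by the above its hypotheses are not satisfied by your system. The justification you offer for it is also not valid:
\begin{itemize}
\item Cohomological dimensions such as $\dim_{\zz/p}$ and $\dim_{\mathbb Q}$ are invariants. They cannot certify a homeomorphism with $\Pi_p$.
\item Primality plays no role in identifying one inverse limit of $M_p$-replacements with another; the definition in \S\ref{sec:dunce} makes sense for every $p\geq 2$. Primality matters for the dimension theory of $\Pi_p$, not for its homeomorphism type.
\end{itemize}
To make the sketch a proof, you would need an actual recognition theorem for $\Pi_p$ that handles both kinds of replacement (disks at manifold points, cones on $\theta_p$ at core points), with density and null conditions. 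You would then need to check that the Davis-complex system satisfies it. Otherwise, you should cite Dranishnikov's argument outright, as the paper does, rather than routing it through a model that is incorrect for $p\geq 3$.
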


\subsubsection{Quasiconvex subgroups with limit set a Menger curve}
In \cite{DLMR26}, we showed that if $L$ is a flag-no-square triangulation of a surface of positive genus, then $W_L$ contains a quasiconvex subgroup with limit set homeomorphic to a Menger curve.
We now prove a similar criterion for triangulations of $\widehat M_p$, which applies to the triangulations in Figure~\ref{fig:dunce_hat} below.
The argument is very similar to that in \cite[\S 5]{DLMR26}, to which we refer for the relevant terminology and notation. 

\begin{proposition}
\label{prop:menger}
    Let $L$ be a triangulation of $\widehat M_p$ with at least one vertex not in the core.
    Then $W_{L^\PS}$ admits a convex cocompact subgroup with limit set a Menger curve.
\end{proposition}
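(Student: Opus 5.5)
Since $L^\PS$ is flag-no-square, $W_{L^\PS}$ is Gromov-hyperbolic. For any full subcomplex $K\subseteq L^\PS$ the standard subgroup $W_K$ is quasiconvex, and its Gromov boundary is that of $W_K$. (In the realization of Theorem~\ref{thm:main}, $W_K$ is a reflection subgroup of a convex cocompact reflection group, hence itself convex cocompact.) So the plan is to find a full subcomplex $K\subseteq L^\PS$ for which $\partial W_K$ is a Menger curve, following \cite[\S5]{DLMR26}. There the relevant $K$ is a full $1$-dimensional-boundary-type subcomplex: a flag-no-square $2$-complex $K$ that is a compact planar-free surface with nonempty boundary (or more generally a complex of the kind treated there), for which the Menger criterion of \cite[\S5]{DLMR26} applies. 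That criterion, via Kapovich--Kleiner type characterizations, requires $\partial W_K$ to be one-dimensional, connected, locally connected, without local cut points, and nonplanar.

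The key steps are as follows.

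\textbf{Locating a surface piece.} Let $v$ be a vertex of $L$ not on the core. The open star of the non-core region of $\widehat M_p$ around $v$ is a genuine surface region (the disk glued to $\partial M_p$, together with the part of $M_p$ away from the core). In fact the complement of the core is a punctured disk union an annulus, i.e. an open disk. Better, a regular neighborhood of the core is $M_p$ itself, which is not a surface for $p\geq 3$ but is a Möbius band for $p=2$. I would instead take $K$ to be the full subcomplex of $L^\PS$ spanned by vertices in the subdivided stars of the core together with some vertices near $v$. The aim is to make $K$ a thickened copy of $M_p$ with some holes, i.e. a $2$-complex deformation retracting to a graph but whose Davis-type boundary is nonplanar.

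\textbf{Applying the \cite{DLMR26} criterion.} The criterion in \cite[\S5]{DLMR26} is, roughly, that $K$ is a full subcomplex of a flag-no-square complex such that $K$ is a connected $2$-complex with no cut vertices or separating edges of certain types (giving connectedness and the absence of local cut points), $K$ has nonempty ``boundary'' making $W_K$ have virtual cohomological dimension $2$ with one-dimensional boundary, and $K$ contains a nonplanar obstruction. The existence of a vertex not in the core guarantees room to delete a $2$-cell from the dunce-hat disk region: the full subcomplex of $L^\PS$ obtained by removing the interior vertices of the PS-subdivided triangles around $v$ is a $2$-complex homotopy equivalent to $M_p$ with a puncture. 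Dranishnikov's subdivision supplies these interior vertices, since each subdivided triangle contains vertices not on the original edges, so removing them is a full-subcomplex operation.

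\textbf{Verifying the hypotheses.} This is the main obstacle. One must check that the resulting $K$ has $H^2$ of the Davis complex compactly supported cohomology vanishing appropriately (so $\dim\partial W_K=1$), and that it has no local cut points. For one-dimensionality, I would use that $K$ collapses to a $1$-dimensional complex, as Bestvina--Mess/Davis do for nerves with free boundary edges. For nonplanarity, I would use that for $p\geq3$ the core neighborhood gives a triod-of-disks configuration, and for $p=2$ the Möbius band does, each yielding a $K_{3,3}$ in $\partial W_K$ exactly as in \cite{DLMR26}. Finally, Kapovich--Kleiner \cite{KK00} identifies $\partial W_K$ as a Menger curve.
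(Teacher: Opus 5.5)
There is a genuine gap: the proposal never fixes the subcomplex $K$, and it never verifies the properties that would make $\partial W_K$ a Menger curve. Your overall plan is right: pass to a full subcomplex $K\subseteq L^\PS$, so that $W_K$ is quasiconvex (and convex cocompact in the realization), and show $\partial W_K$ is a Menger curve.

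\textbf{The choice of $K$.} The candidates you float ("vertices in the subdivided stars of the core together with some vertices near $v$", or a "compact surface with nonempty boundary") are not well defined. The surface picture from \cite{DLMR26} does not transfer: for $p\geq 3$, no subcomplex containing a neighborhood of the core is a surface. Your one concrete candidate deletes the interior vertices of the subdivided triangles around $v$ but keeps $v$. That leaves $v$ with a discrete link and in no $2$-simplex of $K$, and the result is not "homotopy equivalent to $M_p$ with a puncture." The missing idea is to delete only the vertex $v$ itself. Its link in $L^\PS$ is a circle avoiding the core, so the full subcomplex $K$ spanned by $(L^\PS)^{(0)}\setminus\{v\}$ is $\widehat M_p$ minus an open disc in the $2$-cell. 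This deformation retracts onto the core, so $K\simeq S^1$. Moreover $K$ is flag-no-square (being full) and still contains a neighborhood of the core.

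\textbf{The criterion.} Kapovich--Kleiner gives the trichotomy circle / Sierpi\'nski carpet / Menger curve only after you show that $W_K$ does not split over finite or two-ended subgroups and that $\dim\partial W_K=1$. You would then still need $\partial W_K$ to be non-planar. None of this is established by your sketch:
\begin{itemize}
\item "No cut vertices or separating edges of certain types" is not a proof of the no-splitting condition.
\item "$K$ collapses to a graph" is not the relevant condition for one-dimensionality. One needs $H^k(K\setminus\sigma)=0$ for $k\geq 2$ and \emph{every} (possibly empty) simplex $\sigma$.
\item A "triod-of-disks configuration" in $K$ does not by itself produce a $K_{3,3}$ in $\partial W_K$.
\end{itemize}
Passing from nerve-level properties to the boundary is exactly the content of the Danielski--Kapovich--\'{S}wi\k{a}tkowski characterization \cite[Theorem 0.1(2)]{DS21}, which the paper uses. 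It says $\partial W_K$ is a Menger curve once $K$ is non-planar, inseparable, not a join, and has $\pcd(K)=1$.

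\textbf{Checking the conditions for this $K$.} With $K$ as above, each condition is quick:
\begin{itemize}
\item Non-planarity and inseparability (connected, and not disconnected by removing a simplex or the suspension of a simplex) follow from $K$ containing a neighborhood of the core. For $p=2$ this is a M\"obius band; for $p\geq 3$ it is a book with $p$ pages. Neither embeds in $S^2$.
\item $K$ is not a join. In a flag-no-square complex, one join factor would have to be a simplex, making $K$ a cone and contradicting $K\simeq S^1$.
\item $\pcd(K)=1$, since $K\simeq S^1$ and $H^k(K\setminus\sigma)=0$ for $k\geq 2$.
\end{itemize}
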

\begin{proof}
Let $v$ be a vertex of $L$ not in the core of $\widehat M_p$.
    Then $v$ gives rise to a vertex of $L^\PS$, which we still denote by  $v$, whose link is a circle and consists of vertices not in the core of $\widehat M_p$.
    Consider the full subcomplex $K$ on~$(L^\PS)^{(0)}\setminus \{v\}$.
    The standard subgroup $W_K$ is quasiconvex, and we claim the limit set of $W_K$ is a Menger curve.
    By \cite[Theorem 0.1.(2)]{DS21}, it is enough to check that $K$ is non-planar and inseparable, is not a join, and has puncture-respecting cohomological dimension $\pcd (K)=1$.

    Note that $K$ deformation retracts to the core of $\widehat M_p$, hence $K\simeq S^1$.
    In particular, we have that $K$ is connected.
    Since $K$ is a full subcomplex of $L^\PS$, we have that $K$ is automatically flag-no-square.
    The complex $K$ is also non-planar as it contains a neighborhood of the core of $\widehat M_p$.
    It follows that $K$ cannot split as a join of two subcomplexes.
    Moreover, for any (possibly empty) simplex $\sigma \subseteq K$, we have that $H^k(L\setminus \sigma)=0$ for all $k\geq 2$. But~$K\simeq S^1$, so $\pcd (K)=1$.
    Finally, the complex $K$ cannot be disconnected by removing a simplex or the suspension of a simplex, because $K$ contains a neighborhood of the core of $\widehat M_p$.
\end{proof}


\section{Proof of Theorem \ref{thm:main}}

\subsection{A particularly nice Lorentzian bilinear form}

Denote by $\varphi$ the golden ratio $\frac{1+\sqrt{5}}{2}$. For $n \geq 2$, let $B_n$ be the $(n+1) \times (n+1)$ symmetric matrix with entries given by
\begin{equation}\label{eq:Bnmatrix}
    (B_n)_{i,j} = \begin{cases} 1 & \textrm{for } i=j \leqslant n \\ -\varphi & \text{otherwise}. \end{cases}
\end{equation}

We first claim that $B_n$ has signature $(n,1)$. Indeed, we can explicitly compute the eigenvalues of $B_n$; denoting by $e_1, \ldots, e_{n+1}$ the standard basis vectors for $\mathbb{R}^{n+1}$, we have that $e_i - e_{i+1}$ is an eigenvector of $B_n$ with eigenvalue $1+\varphi$ for $i=1, \ldots, n-1$. Moreover, the vector $\sum_{i=1}^n e_i + s_\pm e_{n+1}$ is an eigenvector of $B_n$
with eigenvalue $1 - (n-1+s_\pm)\varphi$, where
\[
s_\pm
= \frac{1 - \varphi(n-2) \pm \sqrt{(\varphi n - 1)^2 + 4\varphi(1+\varphi)}}{2\varphi}.
\]
Then $1-(n-1+s_-)\varphi$ is positive, whereas $1-(n-1+s_+)\varphi$ is negative, since
\[
1 - (n-1+s_\pm)\varphi
= -\frac{1}{2}\left(\varphi n - 1 \pm \sqrt{(\varphi n - 1)^2 + 4\varphi(1+\varphi)}\right)
.
\]
It follows that $B_n$ has exactly one negative eigenvalue, and hence signature $(n,1)$.

Since $B_n$ has signature $(n,1)$, we may identify one of the two components of the level set $\{x \in \mathbb{R}^{n+1} \mid  x^TB_nx = -1\}$ with $n$-dimensional real hyperbolic space $\mathbb{H}^n$, and $\mathrm{O}'(B_n, \mathbb{R})$ with $\mathrm{Isom}(\hh^n)$, where $\mathrm{O}'(B_n, \mathbb{R})$ denotes the index-$2$ subgroup of the orthogonal group $\mathrm{O}(B_n, \mathbb{R})$ preserving $\mathbb{H}^n$. We will fix this model of $\mathbb{H}^n$ for the rest of the argument.

Note that for each spacelike vector $v$, the linear hyperplane of $\rr^{n,1}$ that is orthogonal to $v$ with respect to~$B_n$ defines an oriented geodesic hyperplane of $\hh^n$, and therefore a halfspace of $\hh^n$ (namely, that consisting of $x\in \hh^n\subseteq \rr^{n,1}$ with $x^TB_nv>0$).
Moreover, in this model if $H$ and $H'$ are  geodesic hyperplanes of $\hh^n$ determined by spacelike unit vectors $v, v' \in \rr^{n+1}$, respectively, then, in the case that $H$ intersects $H'$, the dihedral angle~$\theta$ formed by $H$ and~$H'$ satisfies
$\cos\theta = |v^TB_nv'|$,
and in the case that $H$ and $H'$ are disjoint, the distance $\delta$ between $H$ and $H'$ satisfies
$\cosh\delta = |v^TB_nv'|$.
Note that, in the latter case, one has $v^TB_nv'<-1$ precisely when the corresponding halfspaces of $\hh^n$ are neither nested nor disjoint.

Let $\tau: \mathbb{Q}(\sqrt{5}) \rightarrow \mathbb{Q}(\sqrt{5})$ be the nontrivial automorphism of $\mathbb{Q}(\sqrt{5})$, and denote by $B_n^\tau$ the Galois conjugate of $B_n$, i.e., the matrix obtained by applying $\tau$ to $B_n$ entry by entry. Using the fact that $\tau(\varphi) = 1 - \varphi$, one verifies by the above computation that $B_n^\tau$ is positive-definite. It thus follows from the Borel--Harish-Chandra theorem \cite{BHC62, MT62} that $\Delta_n := \mathrm{O}'(B_n, \mathbb{Z}[\varphi])$ is a cocompact arithmetic lattice in $\mathrm{O}'(B_n, \mathbb{R}) = \mathrm{Isom}(\hh^n)$.

\subsection{Passing to a larger complex}\label{sec:larger_complex}
Now fix $n\geq 4$, and let $L$ be a simplicial complex of dimension $\leqslant 3$ on $n$ vertices $v_1, \ldots, v_n$. We construct a right-angled reflection subgroup $\Gamma_L$ of $\Delta_n$ with nerve $L^\PS$ as follows. 
(Here~$L^\PS$ denotes the complex obtained by applying to $L$ the subdivision procedure described in \S\ref{sec:simplicial subs}.)
Let $L_n$ be the 3-skeleton of the $(n-1)$-simplex with vertices $v_1,\dots,v_n$.
We may view  $L$ as a subcomplex of $L_n$, and therefore~$L^\PS$ as a  subcomplex of $L_n^\PS$. (Note that  $L^\PS$ will then necessarily be a {\em full} subcomplex of $L_n^\PS$, since the Przytycki--\'{S}wi\k{a}tkowski subdivision of a simplex of dimension $\geq 1$ is never a simplex; see \cite[Lemma 2.10]{PS09}.) 
We will show that $\Delta_n$ contains a right-angled reflection subgroup $\Gamma_n$ whose nerve is $L_n^\PS$.
We then take $\Gamma_L$ to be the standard subgroup of $\Gamma_n$ corresponding to the subcomplex $L^\PS \subset L_n^\PS$. 

\subsection{Generating the reflection group $\Gamma_n$}

We now explain informally how to construct $\Gamma_n$. Choose a vertex of the right-angled $120$-cell $P_{120} \subset \hh^4$, and let $P_{116} \subset \hh^4$ be the $116$-sided right-angled polyhedron obtained from~$P_{120}$ by ``forgetting'' the four facets of $P_{120}$ containing that vertex. We now view $P_{116}$ as a polyhedron in~$\hh^n$, and for each $3$-simplex $\sigma$ of $L_n$, we choose a particular translate $P_\sigma$ of $P_{116}$ within $\hh^n$. Miraculously, there is a natural (and highly symmetric) way to choose the $P_\sigma$ such that they together form a polyhedron in~$\hh^n$ whose associated reflection group $\Gamma_n$ is contained in $\Delta_n$ and has nerve $L_n^\PS$.

Recall $n$ is the number of vertices of $L_n$. 
We now proceed with the formal construction of $\Gamma_n$. 
We will first associate to each vertex of $L_n^\PS$ a spacelike unit\footnote{These will be unit vectors with respect to the form $B_n$.} vector in $\mathbb{R}^{n+1}$. 
Then, we will define $\Gamma_n$ to be the subgroup of $O'(n,1)$ generated by the reflections in the geodesic hyperplanes of $\hh^n$ determined by these vectors.

The vertices of $L_n^\PS$ belong to a hierarchy: 
\begin{itemize}
\item At the zeroth level of this hierarchy are the vertices $v_1, \ldots, v_n$ that already belonged to the simplicial complex $L_n$ prior to subdivision. It is helpful to think of the $v_i$ as those vertices $v_i$ of $L_n^\PS$ such that $v_i$ lies in the interior of a $0$-simplex of $L_n$. For $i=1,\ldots,n$, we associate to $v_i$ the standard basis vector~$c_ie_i$, with $c_i=1$; see Table~\ref{table:coefficients}. (Notice that $e_i^T B_ne_j = -\varphi$ for $i \neq j$.)

\item Next in the hierarchy are the ``new'' vertices $w$ of $L_n^\PS$ such that $w$ lies in the interior of a $1$-simplex of~$L_n$. There are $\binom{n}{2}$ such vertices of $L_n^\PS$ (one for each $1$-simplex, and hence $6$ for each $3$-simplex, of $L_n$). We associate the vector $f_{ij}=f_{ji}:=c_ie_i+c_je_j +c_{n+1}e_{n+1}$ to the vertex $w$ of $L_n^\PS$ lying in the interior of the $1$-simplex of $L_n$ determined by $v_i$ and $v_j$; for the values of the coefficients, see Table \ref{table:coefficients}. (Notice that $e_m^T B_n f_{ij} = 0$ for $m = i, j$.)

\item Next are the vertices $w$ of $L_n^\PS$ such that $w$ lies in the interior of a $2$-simplex of $L_n$. 
There are $3\binom{n}{3}$ such vertices of $L_n^\PS$ (three for each $2$-simplex, and hence $12$ for each $3$-simplex, of $L_n$). 
We associate the vector $f^i_{jk}=f^i_{kj}:=c_ie_i+c_je_j+c_ke_k +c_{n+1}e_{n+1}$ to the unique vertex $w_i$ of $L_n^\PS$ lying in the interior of the $2$-simplex of $L_n$ determined by $v_i$, $v_j$, $v_k$ and adjacent to $v_i$ in $L_n^\PS$ (see  Figure~\ref{fig:PS-subdivision}); see Table \ref{table:coefficients}  for the values of the coefficients. (Notice that $e_i^T B_n f^{i}_{jk}$, $ f_{ij}^T B_n f^{i}_{jk}$, and $(f_{jk}^i)^T B_n f_{ki}^j$ all vanish.)

\item At the final level are the vertices $w$ of $L_n^\PS$ such that $w$ lies in the interior of a $3$-simplex $\sigma$ of $L_n$. 
There are $94\binom{n}{4}$ such vertices of $L_n^\PS$ ($94$ for each $3$-simplex of $L_n$).
The assignment of vectors $c_i e_i+c_j e_j+c_k e_k+c_\ell e_\ell +c_{n+1}e_{n+1}$ to the vertices of $L_n^\PS$ lying in the interior of the $3$-simplex of $L_n$ determined by $v_i$, $v_j$,~$v_k$ and~$v_\ell$ is again described in Table~\ref{table:coefficients}, but, for brevity's sake, we will not be as explicit here as in the previous steps of the hierarchy. 
Note that in this case there is less symmetry, so we have multiple rows in  Table~\ref{table:coefficients} corresponding to this level of the hierarchy.
We will nevertheless verify that the collection of vectors thus obtained gives rise to a reflection group with the correct nerve.
\end{itemize}

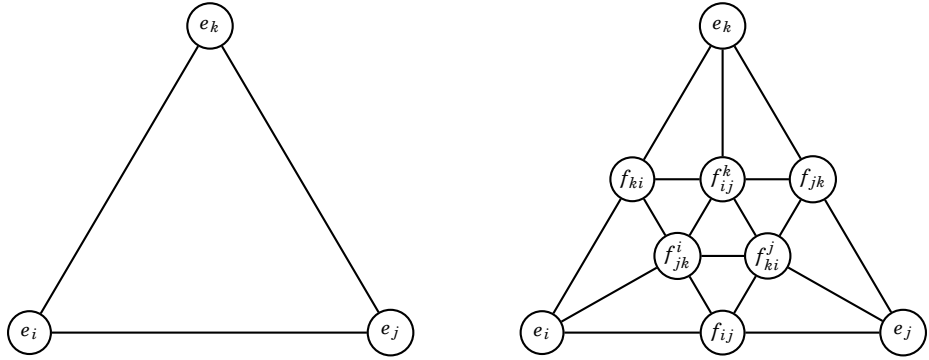
\begin{figure}[ht!]
\begin{tikzpicture}[thick,scale=0.8, every 
node/.style={transform shape}]

\node[draw,circle, inner sep=3.5pt, minimum size=3.5pt] (0i) at (0,0) {{$e_i$}};
\node[draw,circle, inner sep=3.5pt, minimum size=3.5pt] (0j) at (6,0) {{$e_{j}$}};
\node[draw,circle, inner sep=3.5pt, minimum size=3.5pt] (0k) at (3,5.1) {{$e_{k}$}};

\draw (0i) -- (0j);
\draw (0j) -- (0k);
\draw (0k) -- (0i);

\end{tikzpicture}
\quad\quad\quad\quad
\begin{tikzpicture}[thick,scale=0.8, every 
node/.style={transform shape}]

\node[draw,circle, inner sep=3.5pt, minimum size=3.5pt] (0i) at (0,0) {{$e_i$}};
\node[draw,circle, inner sep=3.5pt, minimum size=3.5pt] (0j) at (6,0) {{$e_{j}$}};
\node[draw,circle, inner sep=3.5pt, minimum size=3.5pt] (0k) at (3,5.1) {{$e_{k}$}};

\node[draw,circle, inner sep=2.0pt, minimum size=3.5pt] (1ij) at (3,0) {{$f_{ij}$}};
\node[draw,circle, inner sep=2.0pt, minimum size=3.5pt] (1jk) at (4.5,2.55) {{$f_{jk}$}};
\node[draw,circle, inner sep=2.0pt, minimum size=3.5pt] (1ki) at (1.5,2.55) {{$f_{ki}$}};

\node[draw,circle, inner sep=1.0pt, minimum size=3.5pt] (2k) at (3,2.55) {{$f_{ij}^{k}$}};
\node[draw,circle, inner sep=1.0pt, minimum size=3.5pt] (2i) at (2.25,1.275) {{$f_{jk}^{i}$}};
\node[draw,circle, inner sep=1.0pt, minimum size=3.5pt] (2j) at (3.75,1.275) {{$f_{ki}^{j}$}};

\draw (0i) -- (1ij);
\draw (0j) -- (1ij);
\draw (0j) -- (1jk);
\draw (0k) -- (1jk);
\draw (0k) -- (1ki);
\draw (0i) -- (1ki);

\draw (0i) -- (2i);
\draw (0j) -- (2j);
\draw (0k) -- (2k);

\draw (2i) -- (1ki);
\draw (2i) -- (1ij);
\draw (2j) -- (1ij);
\draw (2j) -- (1jk);
\draw (2k) -- (1ki);
\draw (2k) -- (1jk);

\draw (2i) -- (2j);
\draw (2j) -- (2k);
\draw (2k) -- (2i);

\end{tikzpicture}
\caption{(Left) A $2$-simplex $\sigma$ of $L_n$; (Right) the subdivision $\sigma^\PS$ of $\sigma$ in $L_n^\PS$.}\label{fig:PS-subdivision}
\end{figure}

\begin{table}[h!]
\begin{tabular}{@{} m{1.6cm}||c|c|c|c|c||c||m{1.9cm}}
\hline
\phantom{.} Level in \phantom{.} hierarchy       & $c_i$ & $c_j$ & $c_k$ & $c_{\ell}$ &  $c_{n+1}$ & sum  & $\#$ of permutations of $c_i$, $c_j$, $c_k$, $c_\ell$ \\
\hline\hline
\multicolumn{1}{c||}{$0$} & $1$           & $0$           & $0$           & $0$           & $0$              &  $1$  &  \multicolumn{1}{c}{$4$}   \\
\hline\hline
\multicolumn{1}{c||}{$1$} & $1$           & $1$           & $0$           & $0$           &   $-(2-\varphi)$ &  $\varphi$  &  \multicolumn{1}{c}{$6$}   \\
\hline\hline
\multicolumn{1}{c||}{$2$} & $\varphi$     & $1$           & $1$           & $0$           &   $-1$           &  $1+\varphi$  &  \multicolumn{1}{c}{$12$}   \\
\hline\hline
  & $1+\varphi$   & $\varphi$     & $1$           & $1$           &   $-2$           &  $1+2\varphi$  &  \multicolumn{1}{c}{$12$}   \\
\cline{2-8}
  & $1+\varphi$   & $1+\varphi$   & $\varphi$     & $1$           &   $-(1+\varphi)$ &  $2+2\varphi$  &  \multicolumn{1}{c}{$12$}  \\
\cline{2-8}
  & $1+\varphi$   & $1+\varphi$   & $1+\varphi$   & $1$           &   $-3$           &  $1+3\varphi$  &  \multicolumn{1}{c}{$4$}   \\
\cline{2-8}
  & $2+\varphi$   & $1+\varphi$   & $1+\varphi$   & $\varphi$     & $-(2+\varphi)$   &  $2+3\varphi$  &  \multicolumn{1}{c}{$12$}  \\ 
\cline{2-8}
  & $1+2\varphi$  & $1+\varphi$   & $1+\varphi$   & $1+\varphi$   & $-(1+2\varphi)$  &  $3+3\varphi$  &  \multicolumn{1}{c}{$4$}   \\ 
\cline{2-8}
\multicolumn{1}{c||}{$3$}  & $1+2\varphi$  & $2+\varphi$   & $1+\varphi$   & $1+\varphi$   & $-(3+\varphi)$   &  $2+4\varphi$  &  \multicolumn{1}{c}{$12$}  \\   
\cline{2-8}
  & $1+2\varphi$  & $1+2\varphi$  & $2+\varphi$   & $1+\varphi$   & $-(2+2\varphi)$  &  $3+4\varphi$  &  \multicolumn{1}{c}{$12$}  \\
\cline{2-8}
  & $2+2\varphi$  & $1+2\varphi$  & $1+2\varphi$  & $2+\varphi$   & $-(3+2\varphi)$  &  $3+5\varphi$  &  \multicolumn{1}{c}{$12$}  \\
\cline{2-8}
  & $2+2\varphi$  & $2+2\varphi$  & $1+2\varphi$  & $1+2\varphi$  & $-(2+3\varphi)$  &  $4+5\varphi$  &  \multicolumn{1}{c}{$6$}   \\
\cline{2-8}
  & $2+2\varphi$  & $2+2\varphi$  & $2+2\varphi$  & $1+2\varphi$  & $-(4+2\varphi)$  &  $3+6\varphi$  &  \multicolumn{1}{c}{$4$}   \\
\cline{2-8}
  & $1+3\varphi$  & $2+2\varphi$  & $2+2\varphi$  & $2+2\varphi$  & $-(3+3\varphi)$  &  $4+6\varphi$  &  \multicolumn{1}{c}{$4$}   \\
\hline
\end{tabular}
\caption{The coefficients of the vectors corresponding to the vertices of $L_n^\PS$. The rightmost column contains the number of vertices of each type within the subdivision of a single 3-simplex.}\label{table:coefficients}
\end{table}

\subsection{The nerve of $\Gamma_n$ is $L_n^{\PS}$}
We now proceed to check that $\Gamma_n$ is a right-angled reflection group with nerve~$L_n^{\PS}$.
In other words, we will check that the reflections associated above to the vertices of $L_n^{\PS}$ have an appropriate Gram matrix.

We will first consider the case of a pair of vertices of $L_n^\PS$ that arise from a common 3-simplex of $L_n$.
We will then consider the case of a pair of vertices of $L_n^\PS$ arising from distinct 3-simplices of $L_n$.

\subsubsection{Pairs of vertices from a single $3$-simplex of $L_n$}
Fix four distinct indices $i,j,k,\ell \in \{1,\dots,n\}$ corresponding to a single $3$-simplex $\sigma_{ijk\ell}$ of $L_n$.
Consider the following four vectors
\[
x_p = -(\varphi-1)e_m + (\varphi-1)e_{n+1} \,\textrm{ for } p=1,2,3,4,
\]
corresponding to the four facets of $P_{120}$ that we ``forgot'' in order to obtain $P_{116}$.
For $p=5,\dots, 120$, let $x_p$  be the vectors in the span of $e_i,e_j,e_k,e_\ell,e_{n+1}$ (written in that basis) assigned previously to the 116 vertices of $\sigma_{ijk\ell}^{\PS}$.
Note that the coordinates of $x_p$ are zero outside of $i,j,k,\ell,n+1$.
We now restrict to the 5-dimensional subspace of $\rr^{n,1}$ given by the span of $e_i,e_j,e_k,e_\ell,e_{n+1}$.
Note that the matrix representing the restriction of $B_n$ to the span of $e_i,e_j,e_k,e_\ell,e_{n+1}$, with respect to the latter basis, is the matrix $B_4$; see \eqref{eq:Bnmatrix}.
Moreover, we have
\[
g^{T} B_4 g =
\begin{pmatrix}
2 & 0 & 0 & 0 & 0 \\
0 & 2 & 0 & 0 & 0 \\
0 & 0 & 2 & 0 & 0 \\
0 & 0 & 0 & 2 & 0 \\
0 & 0 & 0 & 0 & -\varphi
\end{pmatrix},
\]
where
\[
g =
\begin{pmatrix}
-\varphi & -1 & 1-\varphi & 0 & \varphi \\
-\varphi & -1 & \varphi-1 & 0 & \varphi \\
-1 & -\varphi & 0 & 1-\varphi & \varphi \\
-1 & -\varphi & 0 & \varphi-1 & \varphi \\
2 & 2 & 0 & 0 & 1-2\varphi
\end{pmatrix}.
\]

Then we have $\varphi^{-1} g^{-1} x_p=(z_p,1)\in \rr^4\times \rr$, where $z_p\in \rr^4$ give the Cartesian coordinates of the vertices of a $600$-cell (the combinatorial dual of the $120$-cell) of unit radius, centered at the origin of Euclidean $4$-space and with edges of length $\varphi^{-1}$. 

More precisely, the corresponding vectors $z_p$ in Euclidean $4$-space are as follows:
\begin{itemize}
\item $8$ vectors obtained by taking permutations of
\[
(\pm 1,0,0,0);
\]

\item $16$ vectors of the form
\[
\left( \pm\frac{1}{2}, \pm\frac{1}{2}, \pm\frac{1}{2}, \pm\frac{1}{2} \right);
\]

\item the remaining $96$ vectors, obtained by taking even permutations of
\[
\left( \pm\frac{1}{2}\varphi, \pm\frac{1}{2}, \pm\frac{1}{2}\varphi^{-1}, 0 \right)
\]
\end{itemize}
(see Coxeter \cite[\S 8.7]{Coxeter48}). 

Let $z_q$ and $z_r$ be vectors in $\rr^4$ corresponding to vertices of the $600$-cell, and let $\theta$ denote the angle between them. Then, using the coordinates, one gets that $\cos \theta \in \left\{1, \frac{\varphi}{2}, \frac{1}{2}, \frac{\varphi^{-1}}{2}, 0, -\frac{\varphi^{-1}}{2}, -\frac{1}{2}, -\frac{\varphi}{2}, -1\right\}$. Hence, if $z_q$ and $z_r$ correspond to adjacent vertices of the $600$-cell, then
\[
\cos \theta = \frac{\varphi}{2}.
\]
Therefore,
\[
x_q^{T} B_4 x_r
= \varphi^{2} (\varphi^{-1} g^{-1} x_q)^{T} (g^{T} B_4 g) (\varphi^{-1} g^{-1} x_r)
= \varphi^{2} (2 \cos \theta - \varphi)
= 0.
\]
This implies that the vectors $x_q$ and $x_r$ are orthogonal with respect to the Lorentzian form $B_4$, so that the corresponding generators of $\Gamma_n$ commute. 

Moreover, if $z_q$ and $z_r$ are not adjacent, then the angle $\theta$ between $z_q$ and $z_r$ satisfies $\cos \theta \leqslant \tfrac{1}{2}$. It follows that in this case
\[
x_q^{T} B_4 x_r = \varphi^{2} (2 \cos \theta - \varphi) \leqslant -\varphi .
\]
Consequently, if the vectors $x_q$ and $x_r$ are not orthogonal, their Lorentzian inner product satisfies
\[
x_q^{T} B_4 x_r \leqslant -\varphi < -1 .
\]
This shows that the associated hyperplanes of $\hh^n$ are disjoint, and that the associated halfspaces are neither nested nor disjoint.

We have at this point verified that, for any two vertices of $L_n^\PS$ lying in a common $3$-simplex of $L_n$, the associated vectors in $\rr^{n+1}$ have the correct combinatorics, i.e., we have shown that the list of reflections associated above to vertices of $L_n^\PS$ arising from a single $3$-simplex of $L_n$ have an appropriate Gram matrix.

\subsubsection{Pairs of vertices from distinct $3$-simplices of $L_n$}

We will now have to consider pairs of vertices of $L_n^\PS$ that do not lie in a common $3$-simplex of $L_n$.
Note that necessarily such vertices are not adjacent in $L_n^\PS$, so we are going to check that the corresponding geodesic hyperplanes of $\hh^n$ have positive distance (and, moreover, that the corresponding halfspaces of $\hh^n$ are neither nested nor disjoint).

Let
\[
x = c_i e_i + c_j e_j + c_k e_k + c_\ell e_\ell + c_{n+1} e_{n+1}
\quad \text{and} \quad
y = c'_{i'} e_{i'} + c'_{j'} e_{j'} + c'_{k'} e_{k'} + c'_{\ell'} e_{\ell'} + c'_{n+1} e_{n+1}
\]
be the vectors corresponding to a pair of vertices $v$ and $w$ of $L_n^{\PS}$, respectively. 
Assume that $v$ and $w$ do not lie in a common $3$-simplex of $L_n$.
Let $\sigma_v$ (respectively, $\sigma_w$) be the simplex of $L_n$ containing $v$ (resp., $w$) in its interior.

\begin{itemize}
\item[Case 1:] $\sigma_v$ and $\sigma_w$ do not intersect. In this case,
\[
\{i,j,k,\ell\} \cap \{i',j',k',\ell'\} = \varnothing,
\]
and
\[
x^T B_n y = -\varphi \bigl(c_i + c_j + c_k + c_\ell + c_{n+1}\bigr)
        \bigl(c'_{i'} + c'_{j'} + c'_{k'} + c'_{\ell'} + c'_{n+1}\bigr)
\leqslant -\varphi < -1.
\]
since $c_i + c_j + c_k + c_\ell + c_{n+1} \geqslant 1$, as one can easily check in Table~\ref{table:coefficients}. 

\item[Case 2:] The intersection of $\sigma_v$ and $\sigma_w$ is a $0$-simplex of $L_n$.
Up to permuting the indices,  it suffices to consider the case where $i = i'$ and
\[
\{j,k,\ell\} \cap \{j',k',\ell'\} = \varnothing.
\]
In this case, we have
\begin{equation}\label{eq:case2}
x^T B_n y = -\varphi \bigl(c_i + c_j + c_k + c_\ell + c_{n+1}\bigr)
        \bigl(c'_{i} + c'_{j'} + c'_{k'} + c'_{\ell'} + c'_{n+1}\bigr)
        + (1+\varphi) c_i c'_{i}.
\end{equation}

If $c_i = 0$ or $c'_i = 0$, then \eqref{eq:case2} $\leqslant -\varphi$. Hence, we may assume that $c_i, c'_i \neq 0$, that is, $c_i, c'_i \geqslant 1$ (see Table~\ref{table:coefficients}).
Since $v$ and $w$ do not lie in a common $3$-simplex of $L_n$, and since $v$ (respectively,  $w$) lies in the interior of $\sigma_v$ (resp., $\sigma_w$), the level of each of $v$ and $w$ in the hierarchy is greater than $0$. 
Hence, one can check using Table~\ref{table:coefficients} and the identity $\varphi^2=\varphi +1$ that 
\[
\varphi c_i \leqslant c_i + c_j + c_k + c_\ell + c_{n+1}.
\]
It follows that \eqref{eq:case2} is bounded above by
\[
-\varphi \cdot (\varphi c_i)(\varphi c'_i) + (1+\varphi) c_i c'_i
= -\varphi c_i c'_i
\leqslant -\varphi < -1.
\]

\item[Case 3:] The intersection of $\sigma_v$ and $\sigma_w$ is a $1$-simplex of $L_n$.

It suffices to consider the case where $i = i'$, $j = j'$, and
\[
\{k,\ell\} \cap \{k',\ell'\} = \varnothing.
\]
In this case, we have
\begin{equation}\label{eq:case3}
x^T B_n y = -\varphi \bigl(c_i + c_j + c_k + c_\ell + c_{n+1}\bigr)
        \bigl(c'_i + c'_j + c'_{k'} + c'_{\ell'} + c'_{n+1}\bigr)
        + (1+\varphi)\bigl(c_i c'_i + c_j c'_j\bigr).
\end{equation}
By the Cauchy--Schwarz inequality, \eqref{eq:case3} is bounded above by
\begin{equation}\label{eq:case3-2}
-\varphi \bigl(c_i + c_j + c_k + c_\ell + c_{n+1}\bigr)
        \bigl(c'_i + c'_j + c'_{k'} + c'_{\ell'} + c'_{n+1}\bigr)
        + (1+\varphi)\sqrt{c_i^2 + c_j^2}\,\sqrt{(c'_i)^2 + (c'_j)^2}.
\end{equation}

As in the previous case, since $v$ and $w$ do not lie in a common $3$-simplex of $L_n$, their level in the hierarchy is greater than $1$. Looking at Table~\ref{table:coefficients}, one sees that the worst-case scenario is that $c_i$, $c'_i$ are the first coefficients; that $c_j$, $c'_j$ are the second coefficients; that $c_k$, $c'_{k'}$ are the third coefficients; and that $c_{\ell}$, $c'_{\ell'}$ are the last coefficients in Table~\ref{table:coefficients}. An exhaustive computation, provided in the SageMath file (\emph{exhaustive-PS-subdivision-Limit-Set}.ipynb) and available on the webpage \cite{computations}, of the $\binom{13}{2} = 78$ possibilities then shows that 
\[
\eqref{eq:case3-2} \leqslant -\varphi < -1.
\]

\item[Case 4:] The intersection of $\sigma_v$ and $\sigma_w$ is a $2$-simplex of $L_n$.

It suffices to consider the case where $i = i'$, $j = j'$, $k = k'$, and $\ell \neq \ell'$.
In this case, we have
\begin{equation}\label{eq:case4}
x^T B_n y = -\varphi \bigl(c_i + c_j + c_k + c_\ell + c_{n+1}\bigr)
        \bigl(c'_i + c'_j + c'_k + c'_{\ell'} + c'_{n+1}\bigr)
        + (1+\varphi)\bigl(c_i c'_i + c_j c'_j + c_k c'_k\bigr).
\end{equation}

Again, by the Cauchy--Schwarz inequality, \eqref{eq:case4} is bounded above by
\begin{equation}\label{eq:case4-2}
-\varphi \bigl(c_i + c_j + c_k + c_\ell + c_{n+1}\bigr)
        \bigl(c'_i + c'_j + c'_k + c'_{\ell'} + c'_{n+1}\bigr)
        + (1+\varphi)\sqrt{c_i^2 + c_j^2 + c_k^2}\,
        \sqrt{(c'_i)^2 + (c'_j)^2 + (c'_k)^2}.
\end{equation}

As in the previous cases, since $v$ and $w$ do not lie in a common $3$-simplex of $L_n$, their level in the hierarchy is equal to $3$.   
Looking at Table~\ref{table:coefficients}, one sees that the worst-case scenario is that $c_i$, $c'_i$ are the first coefficients; that $c_j$, $c'_j$ are the second coefficients; that $c_k$, $c'_{k'}$ are the third coefficients; and that $c_{\ell}$, $c'_{\ell'}$ are the last coefficients in Table~\ref{table:coefficients}. An exhaustive computation, provided in the SageMath file (\emph{exhaustive-PS-subdivision-Limit-Set}.ipynb) and available on the webpage \cite{computations}, of the $\binom{12}{2} = 66$ possibilities then shows that 
\[
\eqref{eq:case4-2} \leqslant -\varphi < -1.
\]
\end{itemize}

Denoting by $x_v \in \rr^{n+1}$ the unit vector we have associated to the vertex $v \in L_n^{\# (0)}$, we have checked that, for every pair of vertices $v,w \in L_n^{\# (0)}$, one has that $B_n(x_v,x_w) = 0$ if and only if $v$ and $w$ are adjacent in $L_n^{\#}$; and that, if~$v$ and $w$ are not adjacent, then $B_n(x_v,x_w) < -1$. Hence, the group $\Gamma_n$ generated by the reflections in the linear hyperplanes of $\rr^{n+1}$ orthogonal to the $x_v$ with respect to the form $B_n$ is indeed a reflection group in $\mathrm{Isom} (\hh^n)$ whose nerve is $L_n^\PS$.

\begin{remark}
The strict inequality $B_n(x_v,x_w) < -1$ whenever $v$ and $w$ are not adjacent moreover shows that $\Gamma_n$ is convex cocompact (see, for instance, \cite[Theorem~4.7]{DH13}). 
Since the reflection group $\Gamma_n$ is automatically geometrically finite, convex cocompactness will also follow once we establish in \S\ref{subsection:containment} that $\Gamma_n$ is a subgroup of the cocompact lattice $\Delta_n$).
We obtain in particular that $\Gamma_n$ is Gromov-hyperbolic, and hence recover the result of Przytycki--\'{S}wi\k{a}tkowski \cite[Proposition~2.13]{PS09} that the complex $L_n^\PS$ is flag-no-square.
\end{remark}

\subsection{$\Gamma_n$ is a subgroup of $\Delta_n$}\label{subsection:containment}

To see that $\Gamma_n \subset \Delta_n$, note that, for each vertex $v$ of $L_n^\PS$ with associated unit vector $x_v \in \rr^{n+1}$ as specified above, the reflection in the linear hyperplane of $\rr^{n+1}$ orthogonal to $x_v$ with respect to the form $B_n$ is given by 
\[x \mapsto x - (x^T B_n x_v)x_v\] 
for $x \in \mathbb{R}^{n+1}$, and this map has image in $\Delta_n=\mathrm{O}'(B_n, \mathbb{Z}[\varphi])$  since the entries of $B_n$ and $x_v$ all lie in $\mathbb{Z}[\varphi]$; see Table~\ref{table:coefficients}.

\subsection{$\Gamma_L$ is  Zariski-dense in $\mathrm{O}(B_n,\mathbb{R})$}
Finally, recall from \S\ref{sec:larger_complex} that $L\subseteq L_n$ and that $\Gamma_L$ is the standard subgroup of $\Gamma_n$ corresponding to the full subcomplex  $L^\PS\subseteq L_n^\PS$.

We now show that the reflection group $\Gamma_L$ is Zariski-dense in $\mathrm{O}(B_n,\mathbb{R})$.
We first show that 
if $(L^{\PS})^{(1)}$ is the join of two disjoint subsets $V_1$ and $V_2$ of $(L^{\PS})^{(0)}$ whose union is $(L^{\PS})^{(0)}$, then either $V_1$ or $V_2$ is empty. First, since no two vertices in $L^{(0)}$ are adjacent in $(L^{\PS})^{(1)}$, the vertices of $L^{(0)}$ all lie in precisely one of the $V_i$, say $V_1$. Now a hypothetical vertex $v$ in $V_2$ that is adjacent to every vertex in $V_1$, and hence to every vertex in $L^{(0)}$, cannot lie in the interior of a $2$- or $3$-simplex of $L$. Moreover, since $n \geqslant 3$, such a vertex $v$ also cannot lie in the interior of a $1$-simplex of $L$. We conclude that $V_2$ must indeed be empty.

Now since $(L^{\PS})^{(1)}$ is not a join, any $\Gamma_L$-invariant subspace of $\mathbb{R}^{n+1}$ either contains the span $U$ of the unit vectors that we have associated to $(L^{\PS})^{(0)}$, or is contained in the intersection $U'$ of their orthogonal complements with respect to the form $B_n$; see, e.g., Vinberg \cite[Proposition~19]{Vinberg72} or \cite[Proposition~3.23]{DGKLM}. 

Since $L$ has dimension $d \geqslant 1$, there are vertices in a positive level of the hierarchy, so we have that $U$ contains $e_1,\dots,e_n$ as well as a vector of the form $\sum_{i=1}^{n+1} c_i e_i$ with $c_{n+1} \neq 0$, and hence coincides with $\mathbb{R}^{n+1}$. Moreover, since $B_n$ is nondegenerate, we have that $U'$ is trivial. Therefore, the action of $\Gamma_L$ on $\mathbb{R}^{n+1}$ is irreducible.

Since $B_n$ is Lorentzian, the Zariski closure of any irreducible subgroup of $\mathrm{O}(B_n,\mathbb{R})$ contains $\mathrm{SO}(B_n,\mathbb{R})$; see, e.g., \cite[Proposition~1]{benoist_harpe}. It follows that $\Gamma_L$ is Zariski-dense in $\mathrm{O}(B_n,\mathbb{R})$.

This completes the proof of Theorem~\ref{thm:main}.

\section{Proofs of Corollaries~\ref{cor:Jakobsche_space},~\ref{cor:manifold},~ and~\ref{cor:pontryagin}}

\begin{proof}[Proof of Corollary~\ref{cor:Jakobsche_space}]
Let $L$ be a triangulation of $N$ with $n$ vertices. By Theorem~\ref{thm:main}, there is a convex cocompact right-angled reflection group $\Gamma_L < \mathrm{Isom}(\mathbb{H}^n)$ whose nerve is the flag-no-square triangulation $L^\PS$ of $N$. By Lemma~\ref{lem:tree}, the Gromov boundary of~$\Gamma_L$ is homeomorphic to $\mathcal X(N)$ (if $N$ is non-orientable) or $\mathcal X(N\# \overline N)$ (if $N$ is orientable), and hence so is the limit set $\Lambda_{\Gamma_L}$ by convex cocompactness of $\Gamma_L$.
\end{proof}

\begin{proof}[Proof of Corollary~\ref{cor:manifold}]
Item~(\ref{item:h6}) follows Corollary~\ref{cor:Jakobsche_space}, together with the existence of a 6-vertex triangulation of the real projective plane. Item~(\ref{item:h7}) follows again from Corollary~\ref{cor:Jakobsche_space}, together with the existence of a 7-vertex triangulation of the torus. Item~(\ref{item:h16}) follows from  Theorem~\ref{thm:main} and Lemma~\ref{lem:RACG homology 3-sphere}, together with the existence of a $16$-vertex triangulation of the Poincar\'e homology $3$-sphere due to Bj\"{o}rner and Lutz~\cite{BL00}.
\end{proof}

The latter authors conjecture \cite[Conjecture 6]{BL00} that 16 is the minimal number of vertices in any  triangulation of the Poincar\'e homology sphere. 
It is known \cite{BD05} that at least $12$ vertices are needed to triangulate any homology $3$-sphere differing from~$S^3$.

\begin{proof}[Proof of Corollary~\ref{cor:pontryagin}]
     This follows from Theorem~\ref{thm:main} and Lemma~\ref{lem:pontryagin}, together with the existence of a triangulation of the $p$-fold dunce hat $\widehat M_p$ with $\big\lceil \tfrac{3p}{2} \big\rceil + 3$ vertices. Figure~\ref{fig:dunce_hat} illustrates the cases $p=2$, $p=3$, and $p=5$.
\end{proof}
Note that the leftmost triangulation in Figure~\ref{fig:dunce_hat} is the  triangulation of the real projective plane $\mathbb{RP}^2=\widehat M_2$ arising from the complete graph on $6$ vertices, and $6$ is the minimal possible number of vertices in any triangulation of $\mathbb{RP}^2$; see~\cite{BA82}. 

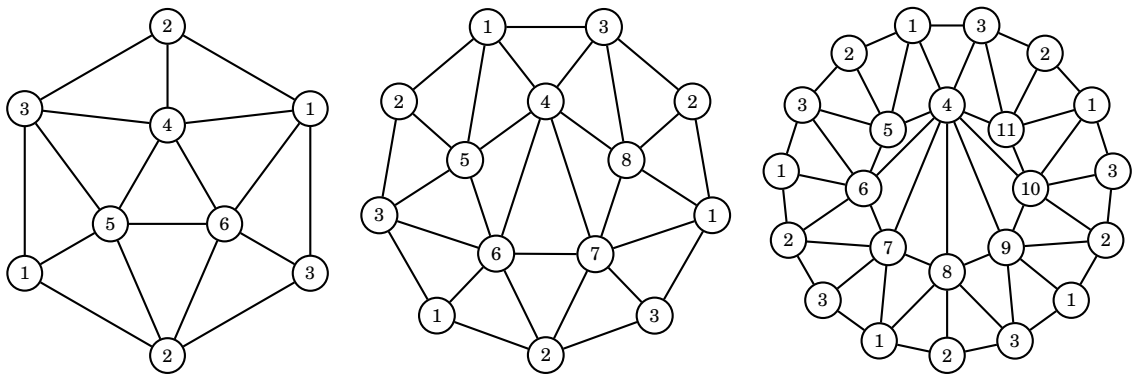
\begin{figure}[ht!]
\begin{tikzpicture}[thick,scale=0.73, every 
node/.style={transform shape}]

\node[draw,circle, inner sep=3.5pt, minimum size=3.5pt] (11) at (0,-3) {{$2$}};
\node[draw,circle, inner sep=3.5pt, minimum size=3.5pt] (12) at (-2.60,-1.5) {{$1$}};
\node[draw,circle, inner sep=3.5pt, minimum size=3.5pt] (13) at (-2.6,1.5) {{$3$}};
\node[draw,circle, inner sep=3.5pt, minimum size=3.5pt] (21) at (0,3) {{$2$}};
\node[draw,circle, inner sep=3.5pt, minimum size=3.5pt] (22) at (2.60,1.5) {{$1$}};
\node[draw,circle, inner sep=3.5pt, minimum size=3.5pt] (23) at (2.60,-1.5) {{$3$}};

\node[draw,circle, inner sep=3.5pt, minimum size=3.5pt] (04) at (0,1.2) {{$4$}};
\node[draw,circle, inner sep=3.5pt, minimum size=3.5pt] (05) at (1.04,-0.6) {{$6$}};
\node[draw,circle, inner sep=3.5pt, minimum size=3.5pt] (06) at (-1.04,-0.6) {{$5$}};

\draw (11) -- (12);
\draw (12) -- (13);
\draw (13) -- (21);
\draw (21) -- (22);
\draw (22) -- (23);
\draw (23) -- (11);

\draw (04) -- (13);
\draw (04) -- (21);
\draw (04) -- (22);
\draw (05) -- (22);
\draw (05) -- (23);
\draw (05) -- (11);
\draw (06) -- (11);
\draw (06) -- (12);
\draw (06) -- (13);

\draw (04) -- (05);
\draw (05) -- (06);
\draw (06) -- (04);
\end{tikzpicture}
\quad
\begin{tikzpicture}[thick,scale=0.75, every node/.style={transform shape}]
\node[draw,circle, inner sep=3.5pt, minimum size=3.5pt] (12) at (0,-3) {{$2$}};
\node[draw,circle, inner sep=3.5pt, minimum size=3.5pt] (13) at (1.93,-2.30) {{$3$}};
\node[draw,circle, inner sep=3.5pt, minimum size=3.5pt] (21) at (2.95,-0.52) {{$1$}};
\node[draw,circle, inner sep=3.5pt, minimum size=3.5pt] (22) at (2.60,1.5) {{$2$}};
\node[draw,circle, inner sep=3.5pt, minimum size=3.5pt] (23) at (1.03,2.82) {{$3$}};
\node[draw,circle, inner sep=3.5pt, minimum size=3.5pt] (31) at (-1.03,2.82) {{$1$}};
\node[draw,circle, inner sep=3.5pt, minimum size=3.5pt] (32) at (-2.60,1.5) {{$2$}};
\node[draw,circle, inner sep=3.5pt, minimum size=3.5pt] (33) at (-2.95,-0.52) {{$3$}};
\node[draw,circle, inner sep=3.5pt, minimum size=3.5pt] (11) at (-1.93,-2.30) {{$1$}};

\node[draw,circle, inner sep=3.5pt, minimum size=3.5pt] (04) at (0.0,1.5) {{$4$}};
\node[draw,circle, inner sep=3.5pt, minimum size=3.5pt] (05) at (-1.43,0.46) {{$5$}};
\node[draw,circle, inner sep=3.5pt, minimum size=3.5pt] (06) at (-0.88,-1.2) {{$6$}};
\node[draw,circle, inner sep=3.5pt, minimum size=3.5pt] (07) at (0.88,-1.21) {{$7$}};
\node[draw,circle, inner sep=3.5pt, minimum size=3.5pt] (08) at (1.43,0.46) {{$8$}};

\draw (11) -- (12);
\draw (12) -- (13);
\draw (13) -- (21);
\draw (21) -- (22);
\draw (22) -- (23);
\draw (23) -- (31);
\draw (31) -- (32);
\draw (32) -- (33);
\draw (33) -- (11);

\draw (04) -- (05);
\draw (05) -- (06);
\draw (06) -- (07);
\draw (07) -- (08);
\draw (08) -- (04);

\draw (06) -- (33);
\draw (06) -- (11);
\draw (06) -- (12);
\draw (07) -- (12);
\draw (07) -- (13);
\draw (07) -- (21);
\draw (08) -- (21);
\draw (08) -- (22);
\draw (08) -- (23);
\draw (04) -- (23);
\draw (04) -- (31);
\draw (05) -- (31);
\draw (05) -- (32);
\draw (05) -- (33);

\draw (04) -- (06);
\draw (04) -- (07);

\end{tikzpicture}
\quad
\begin{tikzpicture}[thick,scale=0.74, every node/.style={transform shape}]

\node[draw,circle, inner sep=3.5pt, minimum size=3.5pt] (12) at (0,-3) {{$2$}};
\node[draw,circle, inner sep=3.5pt, minimum size=3.5pt] (13) at (1.22,-2.74) {{$3$}};
\node[draw,circle, inner sep=3.5pt, minimum size=3.5pt] (21) at (2.23,-2.01) {{$1$}};
\node[draw,circle, inner sep=3.5pt, minimum size=3.5pt] (22) at (2.85,-0.93) {{$2$}};
\node[draw,circle, inner sep=3.5pt, minimum size=3.5pt] (23) at (2.98,0.31) {{$3$}};
\node[draw,circle, inner sep=3.5pt, minimum size=3.5pt] (31) at (2.60,1.5) {{$1$}};
\node[draw,circle, inner sep=3.5pt, minimum size=3.5pt] (32) at (1.76,2.43) {{$2$}};
\node[draw,circle, inner sep=3.5pt, minimum size=3.5pt] (33) at (0.62,2.93) {{$3$}};
\node[draw,circle, inner sep=3.5pt, minimum size=3.5pt] (41) at (-0.62,2.93) {{$1$}};
\node[draw,circle, inner sep=3.5pt, minimum size=3.5pt] (42) at (-1.76,2.43) {{$2$}};
\node[draw,circle, inner sep=3.5pt, minimum size=3.5pt] (43) at (-2.60,1.5) {{$3$}};
\node[draw,circle, inner sep=3.5pt, minimum size=3.5pt] (51) at (-2.98,0.31) {{$1$}};
\node[draw,circle, inner sep=3.5pt, minimum size=3.5pt] (52) at (-2.85,-0.93) {{$2$}};
\node[draw,circle, inner sep=3.5pt, minimum size=3.5pt] (53) at (-2.23,-2.01) {{$3$}};
\node[draw,circle, inner sep=3.5pt, minimum size=3.5pt] (11) at (-1.22,-2.74) {{$1$}};

\node[draw,circle, inner sep=3.5pt, minimum size=3.5pt] (08) at (0,-1.5) {{$8$}};
\node[draw,circle, inner sep=3.5pt, minimum size=3.5pt] (09) at (1.06,-1.06) {{$9$}};
\node[draw,circle, inner sep=2.1pt, minimum size=3.5pt] (010) at (1.5,0) {{$10$}};
\node[draw,circle, inner sep=2.1pt, minimum size=3.5pt] (011) at (1.06,1.06) {{$11$}};
\node[draw,circle, inner sep=3.5pt, minimum size=3.5pt] (04) at (0.0,1.5) {{$4$}};
\node[draw,circle, inner sep=3.5pt, minimum size=3.5pt] (05) at (-1.06,1.06) {{$5$}};
\node[draw,circle, inner sep=3.5pt, minimum size=3.5pt] (06) at (-1.5,0) {{$6$}};
\node[draw,circle, inner sep=3.5pt, minimum size=3.5pt] (07) at (-1.06,-1.06) {{$7$}};

\draw (11) -- (12);
\draw (12) -- (13);
\draw (13) -- (21);
\draw (21) -- (22);
\draw (22) -- (23);
\draw (23) -- (31);
\draw (31) -- (32);
\draw (32) -- (33);
\draw (33) -- (41);
\draw (41) -- (42);
\draw (42) -- (43);
\draw (43) -- (51);
\draw (51) -- (52);
\draw (52) -- (53);
\draw (53) -- (11);

\draw (04) -- (05);
\draw (05) -- (06);
\draw (06) -- (07);
\draw (07) -- (08);
\draw (08) -- (09);
\draw (09) -- (010);
\draw (010) -- (011);
\draw (011) -- (04);

\draw (08) -- (11);
\draw (08) -- (12);
\draw (08) -- (13);
\draw (09) -- (13);
\draw (09) -- (21);
\draw (09) -- (22);
\draw (010) -- (22);
\draw (010) -- (23);
\draw (010) -- (31);
\draw (011) -- (31);
\draw (011) -- (32);
\draw (011) -- (33);
\draw (04) -- (33);
\draw (04) -- (41);
\draw (05) -- (41);
\draw (05) -- (42);
\draw (05) -- (43);
\draw (06) -- (43);
\draw (06) -- (51);
\draw (06) -- (52);
\draw (07) -- (52);
\draw (07) -- (53);
\draw (07) -- (11);

\draw (04) -- (06);
\draw (04) -- (07);
\draw (04) -- (08);
\draw (04) -- (09);
\draw (04) -- (010);
\end{tikzpicture}
\caption{Triangulations of $\widehat M_2$, $\widehat M_3$, and $\widehat M_5$, shown from left to right.}\label{fig:dunce_hat}
\end{figure}

\printbibliography

@article{DGKLM,
 author = {Danciger, Jeffrey and Gu{\'e}ritaud, Fran{\c{c}}ois and Kassel, Fanny and Lee, Gye-Seon and Marquis, Ludovic},
 title = {Convex cocompactness for {Coxeter} groups},
 fjournal = {Journal of the European Mathematical Society (JEMS)},
 journal = {J. Eur. Math. Soc. (JEMS)},
 issn = {1435-9855},
 volume = {27},
 number = {1},
 pages = {119--181},
 year = {2025},
 language = {English},
 doi = {10.4171/JEMS/1539},
 zbMATH = {7981648},
 Zbl = {1561.22009}
}

@article{Vinberg72,
 author = {Vinberg, E. B.},
 title = {Discrete linear groups generated by reflections},
 fjournal = {Mathematics of the USSR. Izvestiya},
 journal = {Math. USSR, Izv.},
 issn = {0025-5726},
 volume = {5},
 pages = {1083--1119},
 year = {1972},
 language = {English},
 doi = {10.1070/IM1971v005n05ABEH001203},
 zbMATH = {3404520},
 Zbl = {0256.20067}
}

@article {RA03,
    AUTHOR = {Radcliffe, David G.},
     TITLE = {Rigidity of graph products of groups},
   JOURNAL = {Algebr. Geom. Topol.},
  FJOURNAL = {Algebraic \& Geometric Topology},
    VOLUME = {3},
      YEAR = {2003},
     PAGES = {1079--1088},
      ISSN = {1472-2747,1472-2739},
   MRCLASS = {20F65},
  MRNUMBER = {2012965},
MRREVIEWER = {Vladimir\ N.\ Bezverkhni\u i},
       DOI = {10.2140/agt.2003.3.1079},
       URL = {https://doi.org/10.2140/agt.2003.3.1079},
}

@article{BL00,
 author = {Bj{\"o}rner, Anders and Lutz, Frank H.},
 title = {Simplicial manifolds, bistellar flips and a 16-vertex triangulation of the {Poincar{\'e}} homology 3-sphere.},
 fjournal = {Experimental Mathematics},
 journal = {Exp. Math.},
 issn = {1058-6458},
 volume = {9},
 number = {2},
 pages = {275--289},
 year = {2000},
 language = {English},
 doi = {10.1080/10586458.2000.10504652},
 url = {https://eudml.org/doc/226563},
 zbMATH = {1647649},
 Zbl = {1101.57306}
}

@article{Ma25,
    AUTHOR = {Ma, Jiming},
     TITLE = {The {U}niversal 2{D} {M}enger {S}pace as {L}imit {S}ets of
              {H}igher-{D}imensional {K}leinian {G}roups},
   JOURNAL = {Int. Math. Res. Not. IMRN},
  FJOURNAL = {International Mathematics Research Notices. IMRN},
      YEAR = {2026},
    NUMBER = {4},
     PAGES = {Paper No. rnag026},
      ISSN = {1073-7928,1687-0247},
   MRCLASS = {22E40 (20H10 57K32)},
  MRNUMBER = {5031455},
       DOI = {10.1093/imrn/rnag026},
       URL = {https://doi.org/10.1093/imrn/rnag026},
}

@ARTICLE{MZ25,
       author = {{Ma}, Jiming and {Zheng}, Fangting},
        title = "{Discrete embeddings of hyperbolic groups with Pontryagin surfaces as boundaries}",
        year = 2025,
        note = {In preparation}
}

@misc{Coxeter48,
 author = {Coxeter, H. S. M.},
 title = {Regular polytopes},
 year = {1948},
 language = {English},
 howpublished = {London: {Methuen} \& {Co}., {Ltd}., {XX}, 322 p. 8 plates. (1948).},
 zbMATH = {3048077},
 Zbl = {0031.06502}
}

@article {MT62,
    AUTHOR = {Mostow, G. D. and Tamagawa, T.},
     TITLE = {On the compactness of arithmetically defined homogeneous
              spaces},
   JOURNAL = {Ann. of Math. (2)},
  FJOURNAL = {Annals of Mathematics. Second Series},
    VOLUME = {76},
      YEAR = {1962},
     PAGES = {446--463},
      ISSN = {0003-486X},
   MRCLASS = {22.55 (20.65)},
  MRNUMBER = {141672},
MRREVIEWER = {T.\ Ono},
       DOI = {10.2307/1970368},
       URL = {https://doi.org/10.2307/1970368},
}

@article {BHC62,
    AUTHOR = {Borel, Armand and Harish-Chandra},
     TITLE = {Arithmetic subgroups of algebraic groups},
   JOURNAL = {Ann. of Math. (2)},
  FJOURNAL = {Annals of Mathematics. Second Series},
    VOLUME = {75},
      YEAR = {1962},
     PAGES = {485--535},
      ISSN = {0003-486X},
   MRCLASS = {20.65 (14.50)},
  MRNUMBER = {147566},
MRREVIEWER = {P.\ Cartier},
       DOI = {10.2307/1970210},
       URL = {https://doi.org/10.2307/1970210},
}

@incollection {Kapovich08,
    AUTHOR = {Kapovich, Michael},
     TITLE = {Kleinian groups in higher dimensions},
 BOOKTITLE = {Geometry and dynamics of groups and spaces},
    SERIES = {Progr. Math.},
    VOLUME = {265},
     PAGES = {487--564},
 PUBLISHER = {Birkh\"auser, Basel},
      YEAR = {2008},
      ISBN = {978-3-7643-8607-8},
   MRCLASS = {30F40 (20F67 57N16)},
  MRNUMBER = {2402415},
MRREVIEWER = {Michel\ Coornaert},
       DOI = {10.1007/978-3-7643-8608-5\_13},
       URL = {https://doi.org/10.1007/978-3-7643-8608-5_13},
}

@article {FR82,
    AUTHOR = {Freedman, Michael Hartley},
     TITLE = {The topology of four-dimensional manifolds},
   JOURNAL = {J. Differential Geometry},
  FJOURNAL = {Journal of Differential Geometry},
    VOLUME = {17},
      YEAR = {1982},
    NUMBER = {3},
     PAGES = {357--453},
      ISSN = {0022-040X,1945-743X},
   MRCLASS = {57N12 (57R80 57R99)},
  MRNUMBER = {679066},
MRREVIEWER = {John\ J.\ Walsh},
       URL = {http://projecteuclid.org/euclid.jdg/1214437136},
}

@article {DR97,
    AUTHOR = {Dranishnikov, A. N.},
     TITLE = {On the virtual cohomological dimensions of {C}oxeter groups},
   JOURNAL = {Proc. Amer. Math. Soc.},
  FJOURNAL = {Proceedings of the American Mathematical Society},
    VOLUME = {125},
      YEAR = {1997},
    NUMBER = {7},
     PAGES = {1885--1891},
      ISSN = {0002-9939,1088-6826},
   MRCLASS = {55M10 (20F55 20J05 57S30)},
  MRNUMBER = {1422863},
MRREVIEWER = {Alexander\ I.\ Suciu},
       DOI = {10.1090/S0002-9939-97-04106-3},
       URL = {https://doi-org.proxy.binghamton.edu/10.1090/S0002-9939-97-04106-3},
}

@article {DLMR26,
    author = {{Douba}, Sami and {Lee}, Gye-Seon and {Marquis}, Ludovic and {Ruffoni}, Lorenzo},
        title = "{Convex cocompact groups in real hyperbolic spaces with limit set a Pontryagin sphere}",
   JOURNAL = {Bull. Lond. Math. Soc.},
  FJOURNAL = {Bulletin of the London Mathematical Society},
    VOLUME = {58},
      YEAR = {2026},
    NUMBER = {4},
       DOI = {10.1112/blms.70319},
       URL = {https://doi.org/10.1112/blms.70319},
}

@article {SU79,
    AUTHOR = {Sullivan, Dennis},
     TITLE = {The density at infinity of a discrete group of hyperbolic
              motions},
   JOURNAL = {Inst. Hautes \'Etudes Sci. Publ. Math.},
  FJOURNAL = {Institut des Hautes \'Etudes Scientifiques. Publications
              Math\'ematiques},
    NUMBER = {50},
      YEAR = {1979},
     PAGES = {171--202},
      ISSN = {0073-8301,1618-1913},
   MRCLASS = {58F17 (22E40 28C10 30C85)},
  MRNUMBER = {556586},
MRREVIEWER = {Troels\ J\o rgensen},
       URL = {http://www.numdam.org/item?id=PMIHES_1979__50__171_0},
}

@article {DH13,
    AUTHOR = {Desgroseilliers, Marc and Haglund, Fr\'{e}d\'{e}ric},
     TITLE = {On some convex cocompact groups in real hyperbolic space},
   JOURNAL = {Geom. Topol.},
  FJOURNAL = {Geometry \& Topology},
    VOLUME = {17},
      YEAR = {2013},
    NUMBER = {4},
     PAGES = {2431--2484},
      ISSN = {1465-3060,1364-0380},
   MRCLASS = {22E40 (20F67 51F15 53C23 57M20)},
  MRNUMBER = {3110583},
MRREVIEWER = {Shihai\ Yang},
       DOI = {10.2140/gt.2013.17.2431},
       URL = {https://doi.org/10.2140/gt.2013.17.2431},
}

@article {BM91,
    AUTHOR = {Bestvina, Mladen and Mess, Geoffrey},
     TITLE = {The boundary of negatively curved groups},
   JOURNAL = {J. Amer. Math. Soc.},
  FJOURNAL = {Journal of the American Mathematical Society},
    VOLUME = {4},
      YEAR = {1991},
    NUMBER = {3},
     PAGES = {469--481},
      ISSN = {0894-0347,1088-6834},
   MRCLASS = {20F32 (57M40)},
  MRNUMBER = {1096169},
MRREVIEWER = {Jerzy\ Dydak},
       DOI = {10.2307/2939264},
       URL = {https://doi.org/10.2307/2939264},
}

@article{DS21,
 author = {Danielski, Daniel and Kapovich, Michael and {\'S}wi{\k{a}}tkowski, Jacek},
 title = {Complete characterizations of hyperbolic {Coxeter} groups with {Sierpi{\'n}ski} curve boundary and with {Menger} curve boundary},
 fjournal = {Fundamenta Mathematicae},
 journal = {Fundam. Math.},
 issn = {0016-2736},
 volume = {267},
 number = {2},
 pages = {117--128},
 year = {2024},
 language = {English},
 doi = {10.4064/fm293-7-2024},
 zbMATH = {7960096},
 Zbl = {1554.20098}
}

@article {BO97,
    AUTHOR = {Bourdon, Marc},
     TITLE = {Sur la dimension de {H}ausdorff de l'ensemble limite d'une
              famille de sous-groupes convexes co-compacts},
   JOURNAL = {C. R. Acad. Sci. Paris S\'er. I Math.},
  FJOURNAL = {Comptes Rendus de l'Acad\'emie des Sciences. S\'erie I.
              Math\'ematique},
    VOLUME = {325},
      YEAR = {1997},
    NUMBER = {10},
     PAGES = {1097--1100},
      ISSN = {0764-4442},
   MRCLASS = {57S25},
  MRNUMBER = {1614024},
       DOI = {10.1016/S0764-4442(97)88712-5},
       URL = {https://doi.org/10.1016/S0764-4442(97)88712-5},
}

@article {PV05,
    AUTHOR = {Potyagailo, Leonid and Vinberg, Ernest},
     TITLE = {On right-angled reflection groups in hyperbolic spaces},
   JOURNAL = {Comment. Math. Helv.},
  FJOURNAL = {Commentarii Mathematici Helvetici. A Journal of the Swiss
              Mathematical Society},
    VOLUME = {80},
      YEAR = {2005},
    NUMBER = {1},
     PAGES = {63--73},
      ISSN = {0010-2571,1420-8946},
   MRCLASS = {20F55 (51F15 57M07 57M50)},
  MRNUMBER = {2130566},
MRREVIEWER = {Ruth\ Kellerhals},
       DOI = {10.4171/CMH/4},
       URL = {https://doi.org/10.4171/CMH/4},
}

@book {BO14,
     TITLE = {Thin groups and superstrong approximation},
    SERIES = {Mathematical Sciences Research Institute Publications},
    VOLUME = {61},
    EDITOR = {Breuillard, Emmanuel and Oh, Hee},
      NOTE = {Selected expanded papers from the workshop held in Berkeley,
              CA, February 6--10, 2012},
 PUBLISHER = {Cambridge University Press, Cambridge},
      YEAR = {2014},
     PAGES = {xii+362},
      ISBN = {978-1-107-03685-7},
   MRCLASS = {20-06},
  MRNUMBER = {3235652},
}

@incollection {DA01,
    AUTHOR = {Davis, Michael W.},
     TITLE = {Exotic aspherical manifolds},
 BOOKTITLE = {Topology of high-dimensional manifolds, {N}o. 1, 2 ({T}rieste,
              2001)},
    SERIES = {ICTP Lect. Notes},
    VOLUME = {9},
     PAGES = {371--404},
 PUBLISHER = {Abdus Salam Int. Cent. Theoret. Phys., Trieste},
      YEAR = {2002},
      ISBN = {92-95003-12-8},
   MRCLASS = {57P99 (57Q99 57R55)},
  MRNUMBER = {1937019},
}

@book {DA08,
    AUTHOR = {Davis, Michael W.},
     TITLE = {The geometry and topology of {C}oxeter groups},
    SERIES = {London Mathematical Society Monographs Series},
    VOLUME = {32},
 PUBLISHER = {Princeton University Press, Princeton, NJ},
      YEAR = {2008},
     PAGES = {xvi+584},
      ISBN = {978-0-691-13138-2; 0-691-13138-4},
   MRCLASS = {20F55 (05B45 05C25 51-02 57M07)},
  MRNUMBER = {2360474},
MRREVIEWER = {Ralf\ Koehl},
}

@article {DGK18,
    AUTHOR = {Danciger, Jeffrey and Gu\'eritaud, Fran\c cois and Kassel,
              Fanny},
     TITLE = {Convex cocompactness in pseudo-{R}iemannian hyperbolic spaces},
   JOURNAL = {Geom. Dedicata},
  FJOURNAL = {Geometriae Dedicata},
    VOLUME = {192},
      YEAR = {2018},
     PAGES = {87--126},
      ISSN = {0046-5755,1572-9168},
   MRCLASS = {57S30 (20F55 22E40 52A20 53C50)},
  MRNUMBER = {3749424},
MRREVIEWER = {Alejandro\ Ucan-Puc},
       DOI = {10.1007/s10711-017-0294-1},
       URL = {https://doi.org/10.1007/s10711-017-0294-1},
}

@article {BD05,
    AUTHOR = {Bagchi, Bhaskar and Datta, Basudeb},
     TITLE = {Combinatorial triangulations of homology spheres},
   JOURNAL = {Discrete Math.},
  FJOURNAL = {Discrete Mathematics},
    VOLUME = {305},
      YEAR = {2005},
    NUMBER = {1-3},
     PAGES = {1--17},
      ISSN = {0012-365X,1872-681X},
   MRCLASS = {57Q15 (05E25)},
  MRNUMBER = {2186679},
MRREVIEWER = {Alberto\ Cavicchioli},
       DOI = {10.1016/j.disc.2005.06.026},
       URL = {https://doi.org/10.1016/j.disc.2005.06.026},
}

@ARTICLE{DR05,
       author = {{Dranishnikov}, A.~N.},
        title = "{Cohomological dimension theory of compact metric spaces}",
      journal = {arXiv Mathematics e-prints},
         year = 2005,
        month = jan,
          doi = {10.48550/arXiv.math/0501523},
archivePrefix = {arXiv},
       eprint = {math/0501523},
 primaryClass = {math.GN},
       adsurl = {https://ui.adsabs.harvard.edu/abs/2005math......1523D}
}

@article {DR99,
    AUTHOR = {Dranishnikov, A. N.},
     TITLE = {Boundaries of {C}oxeter groups and simplicial complexes with
              given links},
   JOURNAL = {J. Pure Appl. Algebra},
  FJOURNAL = {Journal of Pure and Applied Algebra},
    VOLUME = {137},
      YEAR = {1999},
    NUMBER = {2},
     PAGES = {139--151},
      ISSN = {0022-4049,1873-1376},
   MRCLASS = {20H15 (20F65 57M07)},
  MRNUMBER = {1684267},
MRREVIEWER = {Nadia\ Benakli},
       DOI = {10.1016/S0022-4049(97)00202-8},
       URL = {https://doi.org/10.1016/S0022-4049(97)00202-8},
}

@article {BA82,
    AUTHOR = {Barnette, David},
     TITLE = {Generating the triangulations of the projective plane},
   JOURNAL = {J. Combin. Theory Ser. B},
  FJOURNAL = {Journal of Combinatorial Theory. Series B},
    VOLUME = {33},
      YEAR = {1982},
    NUMBER = {3},
     PAGES = {222--230},
      ISSN = {0095-8956,1096-0902},
   MRCLASS = {57Q15 (05B45 05C99 51M20 52A99)},
  MRNUMBER = {693361},
MRREVIEWER = {R.\ Blind},
       DOI = {10.1016/0095-8956(82)90041-7},
       URL = {https://doi-org.proxy.binghamton.edu/10.1016/0095-8956(82)90041-7},
}

@article {DO07,
    AUTHOR = {Dymara, Jan and Osajda, Damian},
     TITLE = {Boundaries of right-angled hyperbolic buildings},
   JOURNAL = {Fund. Math.},
  FJOURNAL = {Fundamenta Mathematicae},
    VOLUME = {197},
      YEAR = {2007},
     PAGES = {123--165},
      ISSN = {0016-2736,1730-6329},
   MRCLASS = {20E42 (20F67 57M07)},
  MRNUMBER = {2365885},
MRREVIEWER = {Anne\ Thomas},
       DOI = {10.4064/fm197-0-6},
       URL = {https://doi.org/10.4064/fm197-0-6},
}

@article {FI03,
    AUTHOR = {Fischer, Hanspeter},
     TITLE = {Boundaries of right-angled {C}oxeter groups with manifold
              nerves},
   JOURNAL = {Topology},
  FJOURNAL = {Topology. An International Journal of Mathematics},
    VOLUME = {42},
      YEAR = {2003},
    NUMBER = {2},
     PAGES = {423--446},
      ISSN = {0040-9383},
   MRCLASS = {57S30 (20F55 20F65)},
  MRNUMBER = {1941443},
MRREVIEWER = {Andrei\ Yu.\ Vesnin},
       DOI = {10.1016/S0040-9383(02)00014-9},
       URL = {https://doi.org/10.1016/S0040-9383(02)00014-9},
}

@incollection {G87,
    AUTHOR = {Gromov, M.},
     TITLE = {Hyperbolic groups},
 BOOKTITLE = {Essays in group theory},
    SERIES = {Math. Sci. Res. Inst. Publ.},
    VOLUME = {8},
     PAGES = {75--263},
 PUBLISHER = {Springer, New York},
      YEAR = {1987},
   MRCLASS = {20F32 (20F06 20F10 22E40 53C20 57R75 58F17)},
  MRNUMBER = {919829},
MRREVIEWER = {Christopher W. Stark},
       DOI = {10.1007/978-1-4613-9586-7_3},
       URL = {https://doi.org/10.1007/978-1-4613-9586-7_3},
}

@article{JA91,
author = {Jakobsche, W.},
journal = {Fundamenta Mathematicae},
language = {eng},
number = {2},
pages = {81-95},
title = {Homogeneous cohomology manifolds which are inverse limits},
url = {http://eudml.org/doc/211855},
volume = {137},
year = {1991},
}

@book {KAP09,
    AUTHOR = {Kapovich, Michael},
     TITLE = {Hyperbolic manifolds and discrete groups},
    SERIES = {Modern Birkh\"auser Classics},
      NOTE = {Reprint of the 2001 edition},
 PUBLISHER = {Birkh\"auser Boston, Boston, MA},
      YEAR = {2009},
     PAGES = {xxviii+467},
      ISBN = {978-0-8176-4912-8},
   MRCLASS = {57M50 (20F65 20H10 30F40 30F45 30F60 32G15)},
  MRNUMBER = {2553578},
       DOI = {10.1007/978-0-8176-4913-5},
       URL = {https://doi.org/10.1007/978-0-8176-4913-5},
}

@article {KK00,
    AUTHOR = {Kapovich, Michael and Kleiner, Bruce},
     TITLE = {Hyperbolic groups with low-dimensional boundary},
   JOURNAL = {Ann. Sci. \'Ecole Norm. Sup. (4)},
  FJOURNAL = {Annales Scientifiques de l'\'Ecole Normale Sup\'erieure.
              Quatri\`eme S\'erie},
    VOLUME = {33},
      YEAR = {2000},
    NUMBER = {5},
     PAGES = {647--669},
      ISSN = {0012-9593},
   MRCLASS = {20F67 (57M07)},
  MRNUMBER = {1834498},
MRREVIEWER = {Thomas\ Delzant},
       DOI = {10.1016/S0012-9593(00)01049-1},
       URL = {https://doi.org/10.1016/S0012-9593(00)01049-1},
}

@article {PS09,
    AUTHOR = {Przytycki, Piotr and \'{S}wi\c{a}tkowski, Jacek},
     TITLE = {Flag-no-square triangulations and {G}romov boundaries in
              dimension 3},
   JOURNAL = {Groups Geom. Dyn.},
  FJOURNAL = {Groups, Geometry, and Dynamics},
    VOLUME = {3},
      YEAR = {2009},
    NUMBER = {3},
     PAGES = {453--468},
      ISSN = {1661-7207,1661-7215},
   MRCLASS = {20F67 (20F55 20F65 57Q15)},
  MRNUMBER = {2516175},
MRREVIEWER = {Michel\ Coornaert},
       DOI = {10.4171/GGD/66},
       URL = {https://doi.org/10.4171/GGD/66},
}

@incollection {MI75,
    AUTHOR = {Milnor, John},
     TITLE = {On the {$3$}-dimensional {B}rieskorn manifolds {$M(p,q,r)$}},
 BOOKTITLE = {Knots, groups, and {$3$}-manifolds ({P}apers dedicated to the
              memory of {R}. {H}. {F}ox)},
    SERIES = {Ann. of Math. Stud.},
    VOLUME = {No. 84},
     PAGES = {175--225},
 PUBLISHER = {Princeton Univ. Press, Princeton, NJ},
      YEAR = {1975},
   MRCLASS = {57D70 (14B05 32C40)},
  MRNUMBER = {418127},
MRREVIEWER = {W.\ D.\ Neumann},
}

@article {SW20sp,
    AUTHOR = {\'{S}wi\k{a}tkowski, Jacek},
     TITLE = {Trees of metric compacta and trees of manifolds},
   JOURNAL = {Geom. Topol.},
  FJOURNAL = {Geometry \& Topology},
    VOLUME = {24},
      YEAR = {2020},
    NUMBER = {2},
     PAGES = {533--592},
      ISSN = {1465-3060,1364-0380},
   MRCLASS = {20F65 (54D80 57M07)},
  MRNUMBER = {4153649},
MRREVIEWER = {Jens\ Harlander},
       DOI = {10.2140/gt.2020.24.533},
       URL = {https://doi.org/10.2140/gt.2020.24.533},
}

@article {SW20,
    AUTHOR = {\'{S}wi\k{a}tkowski, Jacek},
     TITLE = {Trees of manifolds as boundaries of spaces and groups},
   JOURNAL = {Geom. Topol.},
  FJOURNAL = {Geometry \& Topology},
    VOLUME = {24},
      YEAR = {2020},
    NUMBER = {2},
     PAGES = {593--622},
      ISSN = {1465-3060,1364-0380},
   MRCLASS = {20F67 (20F65 57M07)},
  MRNUMBER = {4153650},
MRREVIEWER = {Igor\ Belegradek},
       DOI = {10.2140/gt.2020.24.593},
       URL = {https://doi.org/10.2140/gt.2020.24.593},
}

@book{moussong,
	author = "Moussong, Gabor",
	mrclass = "Thesis",
	mrnumber = "2636665",
	note = "Thesis (Ph.D.)--The Ohio State University",
	pages = "55",
	publisher = "ProQuest LLC, Ann Arbor, MI",
	title = "{Hyperbolic {C}oxeter groups}",
	url = "http://gateway.proquest.com/openurl?url_ver=Z39.88-2004&rft_val_fmt=info:ofi/fmt:kev:mtx:dissertation&res_dat=xri:pqdiss&rft_dat=xri:pqdiss:8824577",
	year = "1988"
}

@Article{benoist_harpe,
  author        = {Benoist, Yves and de la Harpe, Pierre},
  journal       = {Compos. Math.},
  title         = {{Adh{\'e}rence de {Z}ariski des groupes de {C}oxeter}},
  year          = {2004},
  issn          = {0010-437X},
  number        = {5},
  pages         = {1357--1366},
  volume        = {140},
  doi           = {10.1112/S0010437X04000338},
  fjournal      = {Compositio Mathematica},
  mrclass       = {20F55},
  mrnumber      = {MR2081159 (2005g:20059)},
  mrreviewer    = {O. V. Shvartsman},
  url           = {http://dx.doi.org/10.1112/S0010437X04000338},
}

@book {DA24,
    AUTHOR = {Davis, Michael W.},
     TITLE = {Infinite group actions on polyhedra},
    SERIES = {Ergebnisse der Mathematik und ihrer Grenzgebiete. 3. Folge. A
              Series of Modern Surveys in Mathematics [Results in
              Mathematics and Related Areas. 3rd Series. A Series of Modern
              Surveys in Mathematics]},
    VOLUME = {77},
 PUBLISHER = {Springer, Cham},
      YEAR = {[2024] \copyright 2024},
     PAGES = {xi+271},
      ISBN = {978-3-031-48442-1; 978-3-031-48443-8},
   MRCLASS = {20F65 (20F36 20F55 20F67 52B15 57K32 57M05 57Q15)},
  MRNUMBER = {4769472},
MRREVIEWER = {Michael\ Dougherty},
       DOI = {10.1007/978-3-031-48443-8},
       URL = {https://doi.org/10.1007/978-3-031-48443-8},
}

@book {Hauptbook,
    AUTHOR = {Ranicki, A. A. and Casson, A. J. and Sullivan, D. P. and
              Armstrong, M. A. and Rourke, C. P. and Cooke, G. E.},
     TITLE = {The {H}auptvermutung book},
    SERIES = {$K$-Monographs in Mathematics},
    VOLUME = {1},
      NOTE = {A collection of papers of the topology of manifolds},
 PUBLISHER = {Kluwer Academic Publishers, Dordrecht},
      YEAR = {1996},
     PAGES = {vi+190},
      ISBN = {0-7923-4174-0},
   MRCLASS = {57Q25 (57-06)},
  MRNUMBER = {1434100},
MRREVIEWER = {Oliver\ Attie},
       DOI = {10.1007/978-94-017-3343-4},
       URL = {https://doi-org.proxy.binghamton.edu/10.1007/978-94-017-3343-4},
}

@misc{computations,
  author = {Douba, Sami and Lee, Gye-Seon and Marquis, Ludovic and Ruffoni, Lorenzo},
  title = {SageMath files for the computation in this paper},
  year = {},
  publisher = {},
  journal = {},
  howpublished = {Available at \url{https://marquis.pages.math.cnrs.fr/ludo-marquis/sagemath/SageMath.html}},
  commit = {}
}

\end{document}